\documentclass[12pt]{article}
\usepackage[T1]{fontenc}
\usepackage{lmodern,amsmath,amsthm,amsfonts,amssymb,graphicx,float,microtype,thmtools,mathtools,xurl,multirow,tabularx,array,booktabs}
\usepackage[dvipsnames,svgnames,table]{xcolor}
\usepackage[shortlabels]{enumitem}
\setlist[itemize]{topsep=0ex,itemsep=0ex,parsep=0ex}
\setlist[enumerate]{topsep=0ex,itemsep=0ex,parsep=0ex}
\usepackage{pgf,tikz,ifthen}
\usetikzlibrary{arrows}
\usetikzlibrary{calc}
\usepackage[unicode=true]{hyperref}
\hypersetup{
colorlinks,
breaklinks=true,
linkcolor={blue!60!black},
citecolor={black},
urlcolor={blue!60!black},
pdftitle={A Two-Player Zero Forcing Game}}
\usepackage[capitalise, compress, nameinlink, noabbrev]{cleveref}
\crefname{lem}{Lemma}{Lemmas}
\crefname{thm}{Theorem}{Theorems}
\crefname{prop}{Proposition}{Propositions}
\crefname{cor}{Corollary}{Corollaries}

\usepackage[longnamesfirst,numbers,sort&compress]{natbib}
\makeatletter
\def\NAT@spacechar{~}
\makeatother
\usepackage[margin=30mm]{geometry}
\renewcommand{\baselinestretch}{1.15}
\allowdisplaybreaks

\renewcommand{\epsilon}{\varepsilon}

\renewcommand{\ge}{\geqslant}
\renewcommand{\le}{\leqslant}
\renewcommand{\geq}{\geqslant}
\renewcommand{\leq}{\leqslant}

\newcommand{\FF}{\mathcal{F}}

\renewcommand{\thefootnote}{\fnsymbol{footnote}}
\theoremstyle{plain}
\newtheorem{thm}{Theorem}
\newtheorem{lem}[thm]{Lemma}
\newtheorem{cor}[thm]{Corollary}
\newtheorem{prop}[thm]{Proposition}
\theoremstyle{definition}
\newtheorem{conj}[thm]{Conjecture}
\newtheorem{remark}{Remark}
\newtheorem{example}{Example}
\newtheorem{question}{Question}
\title{A Two-Player Zero Forcing Game}

\author{%
Dickson Y. B. Annor\,\footnotemark[1] \qquad
Ben Howerton\,\footnotemark[2]
}
\date{}

\begin{document}

\maketitle

\begin{abstract}
We introduce a competitive two-player zero forcing game on a connected graph.
Alice and Bob alternately seed white vertices or perform legal zero forces in
their own colours, and each player seeks to minimise their own number of seeds.
A force preservation rule prevents avoidable blocking of an opponent's
established force. Because distinct continuations can be equally good for the
player to move, optimal play is defined by a set-valued backward induction, and
\(Z_g(G)\) is the minimum total number of seeds among the resulting optimal
outcomes. We prove that \(Z_g(G)\geq Z(G)\), determine \(Z_g\) for paths,
cycles, stars, complete graphs, and complete bipartite graphs, and characterise
the graphs with
\(Z_g(G)=2\) by an alternating two-chain forcing schedule. We also show that
\(Z_g\) is not minor-monotone and that edge subdivision can either increase
or decrease the parameter. Exact computation verifies \(Z_g(G)\leq2Z(G)\)
through order nine.
\end{abstract}

\textbf{Keywords:} game, zero forcing number, game zero forcing number.

\textbf{2020 Mathematics Subject Classification:}   05C57.

\footnotetext[1]{Department of Mathematical and Physical Sciences, La Trobe University, Bendigo, Australia (\texttt{d.annor@latrobe.edu.au}).
}
\footnotetext[2]{\texttt{bhowerton@pointdynamics.com}.
}

\renewcommand{\thefootnote}{\arabic{footnote}}

\section{Introduction}

Let \(G\) be a graph whose vertices are initially white. If a coloured vertex
\(u\) has exactly one white neighbour \(v\), then \(u\) may \emph{force}
\(v\), written \(u\to v\), by colouring it. A set of initially coloured
vertices that can eventually colour all the remaining white vertices of \(G\) is a \emph{zero forcing set},
and the minimum size of such a set is the \emph{zero forcing number}
\(Z(G)\). Zero forcing arose in the study of minimum rank and maximum nullity
and has since developed into a flexible framework for graph propagation; see,
for example, \cite{aim2008zero,taklimi2013zero}.

Classical zero forcing is cooperative: the initial set and every subsequent
force serve the same objective. Here two players, Alice and Bob, alternately
colour vertices in their own colours. On each turn a player either pays for a
new seed or performs one of their legal forces for free. Each player seeks to
minimise their own seed count. The resulting tension is not merely between
seeding and forcing. A move may also destroy a force that the opponent has
already created. To prevent the game from degenerating into avoidable blocking,
we impose a Force Preservation Rule while allowing a player to execute any of
their own legal forces.

A related \(q\)-analogue of zero forcing is usually denoted by \(Z_q(G)\) and
involves a player and an oracle \cite{butler2015zero,fallat2023qanalogue}. Another
robustness-based variant is leaky forcing. For a fixed \(\ell\), one seeks an
initial set that can still force the graph when the locations of \(\ell\)
vertices unable to perform forces are chosen adversarially
\cite{alameda2024leaky}. Unlike the present game, leaky forcing is a one-player
process: the obstruction comes from disabled forcing vertices rather than an
opponent who colours vertices and minimises a separate seed count.
The invariant studied
here is different: it comes from a competitive two-colour game, and we denote it
by \(Z_g(G)\), following the usual convention for game graph parameters.
Because each player minimises a different coordinate, optimal continuation need
not be unique. We therefore retain every tied optimal outcome by backward
induction and define \(Z_g(G)\) as the least total seed count among those
outcomes.

An interactive implementation of the game, including a guided tutorial,
computer self-play, and exact analysis, is available at
\url{https://zf.pointdynamics.com}.

Our principal results are as follows. We prove the general lower bound
\[
Z_g(G)\geq Z(G),
\]
determine \(Z_g\) for paths, cycles, stars, complete graphs, and complete
bipartite graphs, and characterise the equality \(Z_g(G)=2\) through
alternating forcing chains.
We show that vertex deletion and edge deletion can each change \(Z_g\) in
either direction, so \(Z_g\) is not minor-monotone. We also show that edge
subdivision can either increase or decrease \(Z_g\). Finally, exact computation
verifies \(Z_g(G)\leq 2Z(G)\) for every connected graph through order nine.
These results lead to questions on the gap between \(Z_g\) and \(Z\),
joins, and other structured families.

\section{A Zero Forcing Game}
We now define a two-player game based on the zero forcing rule.

\subsection*{Rules}
Let \(G\) be a connected graph with at least one edge. Initially every vertex
is white. Alice, who uses blue, and Bob, who uses red, alternate turns, with
Alice moving first. On each turn the player chooses exactly one action:
\begin{enumerate}
\item \textbf{Seed move.} Colour one white vertex with the player's colour.
\item \textbf{Force move.} If a vertex of the player's colour has exactly one
white neighbour, colour that neighbour with the player's colour.
\end{enumerate}\par If a white vertex is the target of one of the player's legal forces, colouring that vertex is treated as a force move, not as a seed move. This convention causes no loss under optimal play, since seeding the same target produces the same colouring while increasing that player's seed count.
A move \emph{blocks} a legal force \(u\to v\) if that force is no longer legal
after the move.

\medskip
\noindent\textbf{Force Preservation Rule.}
At the beginning of a turn, consider the opponent's legal forces.
\begin{enumerate}[(i)]
\item The player may always perform one of their own legal forces, even when
that move blocks a competing force of the opponent.
\item A seed move that blocks an opponent's legal force is permitted only when
every available seed move blocks at least one of the opponent's legal forces.
\end{enumerate}
Thus an opponent's established forces must be preserved whenever a nonblocking
seed is available, but they never prevent the player from using a force of
their own.

The game ends when every vertex has been coloured. Let \(S_A(G)\) and
\(S_B(G)\) be the seed sets used by Alice and Bob. Each player seeks to minimise
the size of their own seed set. Alice wins when
\(\lvert S_A(G)\rvert<\lvert S_B(G)\rvert\), Bob wins when the reverse
inequality holds, and the game is a draw when the two counts are equal.

\begin{example}\label{exam:K13}
Let \(c\) be the centre of \(K_{1,3}\), with leaves \(v_1,v_2,v_3\).
Suppose Alice seeds \(v_1\), creating the force \(v_1\to c\). Bob may not seed
\(c\), because seeding \(v_2\) or \(v_3\) preserves Alice's force. If Bob seeds
\(v_2\), however, then both players can force \(c\). Alice may execute
\(v_1\to c\) on her next turn even though this removes Bob's competing force
\(v_2\to c\). The first prohibition illustrates part (ii) of the rule, while
the competing force illustrates part (i).
\end{example}

\subsection{Optimal play and the game zero forcing number}
Because each player minimises their own seed count, rather than a common
payoff, an optimal move need not be unique. We therefore use the following
set-valued backward-induction convention.

For every game position \(P\), let \(\mathcal O(P)\) be a set of final
seed-count pairs. If \(P\) is terminal with seed counts \((s_A,s_B)\), set
\(\mathcal O(P)=\{(s_A,s_B)\}\). Otherwise, for each legal move \(m\), let
\(P_m\) be the resulting position and put
\[
\mathcal U(P)=\bigcup_{m\text{ legal}}\mathcal O(P_m).
\]
If Alice is to move, retain from \(\mathcal U(P)\) all pairs whose first
coordinate is minimum. If Bob is to move, retain all pairs whose second
coordinate is minimum. Thus tied continuations remain optimal:
\[
\mathcal O(P)=
\begin{cases}
\{(s_A,s_B)\in\mathcal U(P):
  s_A=\min\{x:(x,y)\in\mathcal U(P)\}\},&\text{Alice to move},\\[1mm]
\{(s_A,s_B)\in\mathcal U(P):
  s_B=\min\{y:(x,y)\in\mathcal U(P)\}\},&\text{Bob to move}.
\end{cases}
\]
Let \(P_0(G)\) be the initial position and write
\(\mathcal O(G)=\mathcal O(P_0(G))\). The \emph{game zero forcing number of
\(G\)} is
\[
Z_g(G)=\min\{s_A+s_B:(s_A,s_B)\in\mathcal O(G)\}.
\]
This set-valued recursion is an explicit solution convention for the game. It
selects terminal pairs directly from the union of optimal child outcomes and
does not impose any additional preference on a player's tied outcomes. Since
the game tree is finite, \(\mathcal O(G)\) is nonempty and the minimum exists.

This parameter may be viewed as a game-theoretic analogue of the classical zero forcing number $Z(G)$.

\begin{remark}
Unlike many graph games whose outcomes are necessarily win or lose
(see \cite{brevsar2010domination,brevsar2021independence,gardner1981mathematical}),
this game naturally admits three descriptive outcomes: Alice wins, Bob wins,
or the game ends in a draw. This classification does not affect the solution
concept. Optimality is determined solely by minimising the number of seeds
used by the player whose turn it is. Thus a player can be indifferent between
a win, a draw, and a loss when their own seed count is the same.
\end{remark}

\begin{figure}
    \centering
    \begin{tikzpicture}[scale=1]
\begin{scope}[shift ={(9.5, 0)}]
  \draw[ thick] (0,0) -- (3,2.5);   
\draw[ thick] (3,2.5) -- (6,0);
\draw[ thick] (1,-3) -- (5,-3);
\draw[ thick] (0,0) -- (1,-3);
\draw[ thick] (5,-3) -- (6,0);
\draw[ thick] (0,0) -- (1.5,0);
\draw[ thick] (3,2.5) -- (3,1);
\draw[ thick] (6,0) -- (4.5,-0.5);
\draw[ thick] (5,-3) -- (4,-2);
\draw[ thick] (1,-3) -- (2,-2);
\draw[ thick] (1.5,0) -- (4.5,-0.5);
\draw[ thick] (4.5,-0.5) -- (2,-2);
\draw[ thick] (2,-2) -- (3,1);
\draw[ thick] (3,1) -- (4,-2);
\draw[ thick] (4,-2) -- (1.5,0);
 \filldraw[thick, fill=red] (0,0) circle (4pt) node[anchor=east]{$u_1$};
  \filldraw[thick, fill=white] (3,2.5) circle (4pt) node[anchor=south]{$u_2$};
   \filldraw[thick, fill=white] (6,0) circle (4pt) node[anchor=west]{$u_3$};
\filldraw[thick, fill=blue] (5,-3) circle (4pt) node[anchor=north]{$u_4$};
 \filldraw[thick, fill=blue] (1,-3) circle (4pt) node[anchor=north]{$u_5$};
  \filldraw[thick, fill=red] (1.5,0) circle (4pt) node[anchor=south]{$v_1$};
   \filldraw[thick, fill=blue] (4.5,-0.5) circle (4pt) node[anchor=south]{$v_3$};
 \filldraw[thick, fill=white] (2,-2) circle (4pt) node[anchor=north]{$v_5$};
  \filldraw[thick, fill=white] (3,1) circle (4pt) node[anchor=west]{$v_2$};
   \filldraw[thick, fill=white] (4,-2) circle (4pt) node[anchor=west]{$v_4$};  
\end{scope}

    \end{tikzpicture}
    \caption{The Petersen graph.}
    \label{fig:petersencover}
\end{figure}

\section{Examples and basic results}
\begin{example}
In Figure~\ref{fig:petersencover}, the blue vertices and the red vertices are
the seeds chosen by Alice and Bob, respectively. Alice seeds \(u_5\), Bob
seeds \(u_1\), Alice seeds \(u_4\), and Bob seeds \(v_1\). Alice then
performs \(u_5\to v_5\), colouring \(v_5\) blue, and Bob performs
\(u_1\to u_2\), colouring \(u_2\) red. Alice must use her third seed and
chooses \(v_3\), colouring it blue. The remaining moves are
\[
v_1\to v_4\quad\text{(Bob)},\qquad
v_3\to u_3\quad\text{(Alice)},\qquad
u_2\to v_2\quad\text{(Bob)}.
\]
Thus Alice uses three seeds and Bob uses two. In fact, exact backward
induction gives \(Z_g(\text{Petersen graph})=5\).
\end{example}

\begin{figure}
    \centering

\begin{tikzpicture}
\begin{scope}[shift ={(-5, 0)}]
\draw[ thick] (0,0) -- (4,0); 
\draw[ thick] (0,0) -- (0,4);
\draw[ thick] (4,0) -- (4,4);
\draw[ thick] (0,4) -- (4,4);
\draw[ thick] (0,4) -- (2,2);
\draw[ thick] (0,0) -- (2,2);
\draw[ thick] (0,0) -- (-1.5,-1.5);
\draw[ thick] (0,0) -- (0.5,-1.75);
  \filldraw[thick, fill=white] (0,0) circle (4pt) node[anchor=east]{$v_6$};
   \filldraw[thick, fill=white] (0,4) circle (4pt) node[anchor=south]{$v_0$};
 \filldraw[thick, fill=white] (4,4) circle (4pt) node[anchor=south]{$v_5$};
  \filldraw[thick, fill=red] (4,0) circle (4pt) node[anchor=north]{$v_1$};
   \filldraw[thick, fill=blue] (2,2) circle (4pt) node[anchor=west]{$v_4$};
    \filldraw[thick, fill=blue] (-1.5,-1.5) circle (4pt) node[anchor=north]{$v_2$};
     \filldraw[thick, fill=white] (0.5,-1.75) circle (4pt) node[anchor=north]{$v_3$};
      \filldraw[thick, fill=white] (2.5,-2) circle (0pt) node[anchor=north]{Game 1};
\end{scope}

\begin{scope}[shift ={(3, 0)}]
\draw[ thick] (0,0) -- (4,0); 
\draw[ thick] (0,0) -- (0,4);
\draw[ thick] (4,0) -- (4,4);
\draw[ thick] (0,4) -- (4,4);
\draw[ thick] (0,4) -- (2,2);
\draw[ thick] (0,0) -- (2,2);
\draw[ thick] (0,0) -- (-1.5,-1.5);
\draw[ thick] (0,0) -- (0.5,-1.75);
  \filldraw[thick, fill=white] (0,0) circle (4pt) node[anchor=east]{$v_6$};
   \filldraw[thick, fill=white] (0,4) circle (4pt) node[anchor=south]{$v_0$};
 \filldraw[thick, fill=blue] (4,4) circle (4pt) node[anchor=south]{$v_5$};
  \filldraw[thick, fill=red] (4,0) circle (4pt) node[anchor=north]{$v_1$};
   \filldraw[thick, fill=white] (2,2) circle (4pt) node[anchor=west]{$v_4$};
    \filldraw[thick, fill=blue] (-1.5,-1.5) circle (4pt) node[anchor=north]{$v_2$};
     \filldraw[thick, fill=red] (0.5,-1.75) circle (4pt) node[anchor=north]{$v_3$};
     \filldraw[thick, fill=white] (2.5,-2) circle (0pt) node[anchor=north]{Game 2};
\end{scope}

\end{tikzpicture}
\caption{Two legal plays on the same graph $H$.}
    \label{fig:2optimalgames}
\end{figure}

\begin{example}
Figure~\ref{fig:2optimalgames} illustrates that different legal plays on the
same graph can use different totals. In Game~1, Alice seeds \(v_2\), Bob seeds
\(v_1\), Alice performs \(v_2\to v_6\), and Bob performs
\(v_1\to v_5\). Alice then seeds \(v_4\), Bob performs
\(v_5\to v_0\), and Alice finishes with \(v_6\to v_3\). Thus the seed-count
pair is \((2,1)\).

In Game~2, Alice seeds \(v_5\), Bob seeds \(v_1\), Alice performs
\(v_5\to v_0\), and Bob performs \(v_1\to v_6\). Alice then performs
\(v_0\to v_4\), Bob seeds \(v_3\), and Alice seeds \(v_2\). Thus the
seed-count pair is \((2,2)\). These two plays motivate the need to distinguish
legal play from optimal play.
\end{example}
\begin{example}\label{exam:minimum-necessary}
Let \(J=K_5-e\), where \(x\) and \(y\) are the nonadjacent vertices and
\(p,q,r\) are the remaining vertices. Set-valued backward induction gives
\[
\mathcal O(J)=\{(2,1),(2,2)\}.
\]
The pair \((2,1)\) is realised by
\[
\begin{array}{c|ccccc}
\text{turn}&1&2&3&4&5\\ \hline
\text{move}&
\text{Alice seeds }p&
\text{Bob seeds }x&
\text{Alice seeds }q&
x\to r&
p\to y,
\end{array}
\]
while \((2,2)\) is realised when Alice seeds \(x\), Bob seeds \(y\), Alice
seeds \(p\), Bob seeds \(q\), and Alice forces \(x\to r\). A direct check of
the five-vertex game tree shows that both continuations survive the recursive
optimality test above. Their totals are \(3\) and \(4\), respectively, so
\[
Z_g(J)=3.
\]
This proves that the minimum in the definition is necessary.
\end{example}

\begin{prop}\label{prop:force-count}
Let \(G\) be a connected graph of order \(n\ge2\). Every completed game on
\(G\) consists of exactly \(n\) moves. If \(F_A(G)\) and \(F_B(G)\) denote the
numbers of force moves performed by Alice and Bob, respectively, then
\[
|S_A(G)|=\left\lceil\frac n2\right\rceil-F_A(G),
\qquad
|S_B(G)|=\left\lfloor\frac n2\right\rfloor-F_B(G).
\]
Consequently, if \(F(G)=F_A(G)+F_B(G)\), then
\[
F(G)=n-\bigl(|S_A(G)|+|S_B(G)|\bigr).
\]
\end{prop}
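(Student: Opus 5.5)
The plan is to count moves rather than analyse strategy, since the statement concerns every completed game, not only optimal ones.

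\textbf{Step 1: each move colours exactly one vertex.} Under the rules, a seed move colours one white vertex. A force move colours the unique white neighbour of one of the player's vertices. No move ever recolours or uncolours a vertex. Hence the number of coloured vertices increases by exactly one per move.

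\textbf{Step 2: the game always finishes.} At every nonterminal position some white vertex exists, so a seed move exists. Part (ii) of the Force Preservation Rule never forbids all seeds: it allows blocking seeds exactly when no nonblocking seed is available. So a legal move always exists. The game ends precisely when all \(n\) vertices are coloured, so every completed game has exactly \(n\) moves.

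\textbf{Step 3: split the moves between the players.} Since Alice moves first and turns alternate, Alice makes moves \(1,3,5,\dots\) and Bob makes moves \(2,4,\dots\). Thus Alice makes \(\lceil n/2\rceil\) moves and Bob makes \(\lfloor n/2\rfloor\).

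\textbf{Step 4: separate seeds from forces.} Each move is exactly one of a seed move or a force move. The convention that colouring a legal force target counts as a force means the classification is unambiguous. Each seed move adds exactly one vertex to that player's seed set, and distinct seed moves colour distinct vertices. Therefore \(\lceil n/2\rceil=|S_A(G)|+F_A(G)\) and \(\lfloor n/2\rfloor=|S_B(G)|+F_B(G)\), which rearranges to the displayed formulas. Adding the two identities and using \(\lceil n/2\rceil+\lfloor n/2\rfloor=n\) gives \(F(G)=n-(|S_A(G)|+|S_B(G)|)\).

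The only step needing care is Step 2, the nonexistence of a stuck position. It follows directly from the wording of rule (ii), so I expect no real obstacle. The hypotheses that \(G\) is connected with \(n\ge2\) are not needed beyond ensuring the game is defined.
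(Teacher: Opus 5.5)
Your proof is correct and follows the paper's argument. Each move colours exactly one new vertex, so a completed game has exactly \(n\) moves; alternation gives \(\lceil n/2\rceil\) and \(\lfloor n/2\rfloor\) turns; and each turn is either a seed or a force. Your Step~2, which rules out stuck positions, is harmless but not needed for a statement about completed games (the paper omits it), and it is slightly imprecise: under the convention, a white vertex that is a target of the mover's own force gives a force move rather than a seed move, though a legal move exists either way.
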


\begin{proof}
Each move colours exactly one previously white vertex, and no coloured vertex
ever changes colour. Thus every completed game has \(n\) moves. Alice makes
the odd-numbered moves and Bob the even-numbered moves, so they have
\(\lceil n/2\rceil\) and \(\lfloor n/2\rfloor\) turns, respectively. Each of
a player's turns is either a seed or a force, which gives the two displayed
identities and their sum.
\end{proof}

\begin{cor}\label{cor:force-count}
For each player, minimising their own number of seeds is equivalent to
maximising their own number of forces. Among games that arise under optimal
play, a game realises \(Z_g(G)\) if and only if it maximises the total number
of force moves.
\end{cor}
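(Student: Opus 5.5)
The corollary is a direct reading of the identities in \cref{prop:force-count}, and I would prove it in two short steps.

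\textbf{First assertion.} The values \(\lceil n/2\rceil\) and \(\lfloor n/2\rfloor\) depend only on \(G\), not on the play. This is because every completed game has exactly \(n\) moves and the turn order is fixed. So for any two completed games \(\Gamma,\Gamma'\) on \(G\) we have
\[
|S_A(\Gamma)|-|S_A(\Gamma')|=F_A(\Gamma')-F_A(\Gamma),
\]
and similarly for Bob. Hence the order on terminal outcomes induced by Alice's seed count is exactly the reverse of the order induced by her force count. The same holds for Bob.

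It follows that the set-valued recursion defining \(\mathcal O(P)\) gives the same result under either description. At each node, retaining the pairs with minimum \(s_A\) is the same as retaining the continuations with maximum \(F_A\). I would state this node by node, since the recursion compares terminal outcomes only through the mover's own coordinate. A one-line induction on the depth of the game tree then suffices, though strictly even that is unnecessary, because the equivalence holds at every single comparison.

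\textbf{Second assertion.} Summing the two identities gives
\[
|S_A|+|S_B|=n-F,
\]
where \(n\) is fixed. Among the outcomes in \(\mathcal O(G)\), that is, among games arising under optimal play, the total seed count is minimised exactly when \(F\) is maximised. By definition \(Z_g(G)\) is that minimum total, so a game arising under optimal play realises \(Z_g(G)\) if and only if it maximises \(F\) within that class.

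I would note one point of care. The maximisation of \(F\) is taken only over optimal-play outcomes, not over all legal games. This is the main point to phrase carefully, because \cref{exam:minimum-necessary} shows that the optimal outcomes can differ in their totals. There is no real obstacle beyond stating the restriction precisely.
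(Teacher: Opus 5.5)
Your proposal is correct and matches the paper, which gives no separate proof and treats the corollary as an immediate consequence of the identities in Proposition~\ref{prop:force-count}: since \(n\), \(\lceil n/2\rceil\) and \(\lfloor n/2\rfloor\) are fixed, each player's seed count is the order-reversing complement of their force count, and the total seed count is \(n-F\). You also correctly restrict the maximisation of \(F\) to the optimal-play outcomes in \(\mathcal O(G)\), which is the intended reading of the second assertion.
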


\begin{lem}\label{lem:cup}
Let $G$ be a connected graph, and consider any completed game on $G$.
Then the set
\[
S_A(G)\cup S_B(G)
\]
is a zero forcing set of $G$.
\end{lem}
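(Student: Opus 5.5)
We replay the completed game as a single-colour zero forcing process. Let \(S=S_A(G)\cup S_B(G)\), and list the \(n\) moves of the game in order as \(m_1,\dots,m_n\); by \cref{prop:force-count} each move colours exactly one new vertex. We will show by induction on \(t\) that, starting from \(S\) coloured, the ordinary (colour-blind) zero forcing rule can colour every vertex coloured in the first \(t\) moves. The invariant is that after the replay has processed \(m_1,\dots,m_t\), the coloured set is
\[
C_t = S\cup\{\text{vertices coloured by } m_1,\dots,m_t\}.
\]
This set contains the game's coloured set \(W_t\) after \(t\) moves. All seeds are already in \(C_0=S\).

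Now consider move \(m_{t+1}\). If it is a seed move, its vertex already lies in \(S\), so nothing needs to be done. If it is a force move \(u\to v\) by some player, then \(u\) is coloured in the game at time \(t\), so \(u\in W_t\subseteq C_t\). In the game, \(v\) was the unique white neighbour of \(u\), because \(u\) had exactly one white neighbour at that time. Every other neighbour of \(u\) therefore lies in \(W_t\subseteq C_t\).

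Two cases remain. If \(v\in C_t\) already, for instance because \(v\) is a seed (which is impossible here, since force targets are not seeds, but the case is harmless), then no action is needed. Otherwise \(v\) is the unique white neighbour of \(u\) with respect to \(C_t\), so the classical force \(u\to v\) is legal and we obtain \(C_{t+1}\). The key point is that colours are irrelevant in classical zero forcing: a player's force only requires that its source be coloured, in any colour, and that it have exactly one white neighbour. Both conditions are preserved when passing from \(W_t\) to the larger set \(C_t\), because a superset can only have fewer white vertices, and \(v\) remains the only candidate.

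After \(n\) steps, \(C_n=V(G)\), so \(S\) is a zero forcing set. The only care needed is the monotonicity observation that \(W_t\subseteq C_t\) keeps \(u\)'s other neighbours coloured. No real obstacle is expected, since neither the Force Preservation Rule nor optimality plays any role.
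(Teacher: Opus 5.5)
Your proof is correct and follows the paper's argument. The paper also precolours every seed, replays the game's force moves in their original order ignoring colours, and notes two facts: each force target is not a seed (so it is still white), and the extra precoloured seeds can only remove other white neighbours of the forcing vertex. Your invariant $C_t=S\cup W_t$ simply makes that replay explicitly inductive.
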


\begin{proof}
Start an ordinary zero forcing process with every vertex of
\(S_A(G)\cup S_B(G)\) black, including seeds that were played later in the
game. Replay the game's force moves in their original order and ignore the two
colours. The target of a game force is never a seed, so it is still white when
that force is reached in the replay. Precolouring later seeds can only remove
other white neighbours of the forcing vertex. Hence every replayed force is a
valid ordinary zero force.

The replay colours every vertex that was not a seed, so
\(S_A(G)\cup S_B(G)\) is a zero forcing set of \(G\).
\end{proof}

\begin{lem}\label{lem:2}
Let $G$ be a connected graph of order $n\ge 2$. In every play of the game,
\[
|S_A(G)|\geq 1 \quad\text{and}\quad |S_B(G)|\geq 1.
\]
Consequently,
$
Z_g(G)\geq 2$.

\end{lem}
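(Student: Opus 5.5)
The proof examines each player's first move; the claimed inequalities for every play then give \(Z_g(G)\ge 2\) at once, since every pair in \(\mathcal O(G)\) is the seed-count pair of some completed play.

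\textbf{Alice.} Initially every vertex is white, so no vertex of Alice's colour exists and she has no legal force. Her first move is therefore a seed move, and \(|S_A(G)|\ge1\).

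\textbf{Bob.} Bob has a turn because \(n\ge2\) and, by \cref{prop:force-count}, every completed game has exactly \(n\) moves. We show that Bob's first action is also a seed. The key observation is an invariant: before Bob has seeded, no vertex is red. A force move by Bob requires a red vertex with exactly one white neighbour. Alice's moves colour vertices blue only, so until Bob's first seed there are no red vertices and Bob has no legal force. Thus at each of his turns before his first seed, Bob's only options are seed moves.

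\textbf{Legality of a seed.} We must check that a legal seed move always exists when Bob is on move before seeding, so that his first action really is a seed rather than the game stalling. At a nonterminal position some white vertex exists. Part (ii) of the Force Preservation Rule never forbids all seeds: if every available seed blocks an opponent force, then blocking seeds are permitted. Hence some seed move is always legal, Bob's first move is a seed, and \(|S_B(G)|\ge1\).

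\textbf{Conclusion.} Summing the two bounds gives \(s_A+s_B\ge2\) for every \((s_A,s_B)\in\mathcal O(G)\), so \(Z_g(G)\ge2\).

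The only step needing care is confirming that the Force Preservation Rule cannot leave Bob without a legal move. This follows directly from the clause "permitted only when every available seed move blocks", which guarantees some seed is always allowed. Everything else is immediate.
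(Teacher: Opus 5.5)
Your proof is correct and follows essentially the same argument as the paper: Alice has no blue vertex and Bob has no red vertex on their respective first turns, so each must seed, and $n\ge 2$ guarantees that Bob gets a turn. Your extra check that the Force Preservation Rule always leaves some seed legal is a point the paper leaves implicit; it is harmless.
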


\begin{proof}
Since all vertices of $G$ are initially white, Alice cannot make a force move on her first turn. Therefore, her first move must be a seed move, and hence
$
|S_A(G)|\geq 1$.

Now consider Bob's first turn. Because $G$ has at least one edge, it has at least two vertices. Therefore, the game cannot end after Alice's first move, since a seed move colors only one vertex. Hence, Bob receives at least one turn.

At the beginning of Bob's first turn, no vertex is coloured red, because Bob has not yet made a move. A force move by Bob requires a red vertex with exactly one white neighbour. Thus, Bob has no legal force move available on his first turn and must make a seed move. Therefore, $
|S_B(G)|\geq 1$.

It follows that every play of the game satisfies
$
|S_A(G)|+|S_B(G)|\geq 2$. 
Since $Z_g(G)$ is defined as the minimum total number of seeds used among all optimal-play outcomes, we obtain
$ Z_g(G)\geq 2$.
\end{proof}

\begin{lem}\label{lem:one-one-retention}
Let \(G\) be a connected graph of order $n\ge 2$. If a legal play has
outcome \((1,1)\), then \((1,1)\in\mathcal O(G)\) and \(Z_g(G)=2\).
\end{lem}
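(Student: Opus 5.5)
By \cref{lem:2}, every completed game has outcome \((s_A,s_B)\) with \(s_A\ge1\) and \(s_B\ge1\), so \((1,1)\) is coordinatewise the smallest possible outcome. The plan is to show by backward induction along the given play that \((1,1)\) survives the selection at every position the play visits. The final claim then follows at once. Once \((1,1)\in\mathcal O(G)\), we get \(Z_g(G)\le 2\), and \cref{lem:2} gives \(Z_g(G)\ge2\).

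Fix the legal play \(P_0,P_1,\dots,P_n\), where \(P_{i+1}\) is obtained from \(P_i\) by the move \(m_i\) actually played; by \cref{prop:force-count} the play has exactly \(n\) moves. I will prove by downward induction on \(i\) that \((1,1)\in\mathcal O(P_i)\) for every \(i\).

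The base case is \(i=n\). Here \(P_n\) is terminal with seed counts \((1,1)\), so \(\mathcal O(P_n)=\{(1,1)\}\).

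For the inductive step, suppose \((1,1)\in\mathcal O(P_{i+1})\). Since \(m_i\) is legal at \(P_i\), we have \((1,1)\in\mathcal U(P_i)\). Every pair in \(\mathcal U(P_i)\) is a final seed-count pair of some completed game, so \cref{lem:2} gives both coordinates at least \(1\). Hence the minimum first coordinate over \(\mathcal U(P_i)\) is \(1\), and so is the minimum second coordinate. Whichever player is to move, the retention rule therefore keeps \((1,1)\), so \((1,1)\in\mathcal O(P_i)\). At \(i=0\) this gives \((1,1)\in\mathcal O(G)\).

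The main point needing care is that \(\mathcal O(P)\) consists only of terminal pairs of completed games reachable from \(P\). This holds by a trivial induction on the recursive definition. Also, \cref{lem:2} applies to any completed game on \(G\), including those passing through intermediate positions. With these two facts the argument is routine, and I expect no real obstacle.
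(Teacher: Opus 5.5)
Your proposal is correct and follows essentially the same argument as the paper. Both run a backward induction along the given play, using Lemma~\ref{lem:2} to see that $(1,1)$ minimises the mover's coordinate in $\mathcal U(P_i)$ at every position, so it is retained up to the root, and then combine $Z_g(G)\le 2$ with Lemma~\ref{lem:2}. Your explicit check that every pair in $\mathcal U(P_i)$ is the final seed count of a completed game is a point the paper leaves implicit.
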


\begin{proof}
Work backwards along the given play. Its terminal position has outcome
\((1,1)\). Suppose this pair has been retained at the position following the
next move on the play. It then belongs to the union of optimal child outcomes
at the preceding position. By Lemma~\ref{lem:2}, every terminal outcome has
both coordinates at least \(1\). The coordinate belonging to the player who
moves at the preceding position is therefore already minimum, so the
backward-induction rule retains \((1,1)\) there as well. Induction back to the
initial position gives \((1,1)\in\mathcal O(G)\), and Lemma~\ref{lem:2}
then gives \(Z_g(G)=2\).
\end{proof}

The following bound is immediate. 

\begin{thm}\label{prop:ZgZ}
For every connected graph $G$ with order $n\ge 2$,
$Z_g(G)\ge Z(G)$. 
\end{thm}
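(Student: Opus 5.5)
The plan is to combine Lemma~\ref{lem:cup} with the definition of \(Z_g(G)\). Since the game tree is finite, \(\mathcal O(G)\) is nonempty. Hence there is a pair \((s_A,s_B)\in\mathcal O(G)\) with \(s_A+s_B=Z_g(G)\). Every pair in \(\mathcal O(G)\) is, by construction of the set-valued recursion, the outcome of some completed game. So I first check that membership in \(\mathcal O(P)\) is always witnessed by a play.

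I would verify this by a short induction up the tree. At terminal positions the unique pair is the outcome of the trivial play. At a nonterminal position \(P\), each retained pair lies in \(\mathcal O(P_m)\) for some legal move \(m\). By induction it is the outcome of a play from \(P_m\), and prefixing \(m\) gives a play from \(P\). Applying this at the initial position yields a completed game whose seed sets satisfy \(|S_A(G)|=s_A\) and \(|S_B(G)|=s_B\).

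Next, apply Lemma~\ref{lem:cup} to this completed game. The set \(S_A(G)\cup S_B(G)\) is a zero forcing set of \(G\), so
\[
Z(G)\le |S_A(G)\cup S_B(G)|\le |S_A(G)|+|S_B(G)|=s_A+s_B=Z_g(G).
\]
In fact the union is disjoint, since each vertex is coloured once, but the inequality suffices.

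There is no real obstacle. The only point needing care is the witnessing step: \(Z_g(G)\) is defined through outcome pairs rather than through plays, so one must make sure that the minimising pair actually comes from a legal completed game. The induction above settles this. All the combinatorial content is already contained in Lemma~\ref{lem:cup}.
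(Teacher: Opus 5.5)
Your proposal is correct and follows essentially the same approach as the paper: apply Lemma~\ref{lem:cup} to a completed game realising a minimising pair of \(\mathcal O(G)\), then compare with \(Z(G)\). Your inductive witnessing step, showing that every retained pair is the outcome of an actual completed play, simply makes explicit what the paper's phrase ``a completed game arising from an optimal play'' takes for granted.
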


\begin{proof}
Consider a completed game on $G$ arising from an optimal play and let $S=S_A(G) \cup S_B(G)$. From Lemma~\ref{lem:cup}, $S$ is a zero forcing set of $G$. Therefore
$$
|S_A(G)|+|S_B(G)| = |S| \ge Z(G).
$$
Since this inequality holds for every game arising from optimal play, taking
the minimum over all such games gives
$Z_g(G)\ge Z(G)$.
\end{proof}

The bound in Theorem~\ref{prop:ZgZ} is best possible. For example, the Petersen graph in Figure~\ref{fig:petersencover} satisfies $Z_g(\text{Petersen graph}) = Z(\text{Petersen graph}) = 5$.

\begin{cor}
For every connected graph $G$ with order $n\ge 2$,
\[
Z_g(G)\ge \max\{\operatorname{tw}(G),\,P(G)\},
\]
where \(\operatorname{tw}(G)\) and \(P(G)\) denote the treewidth and
path cover number of \(G\), respectively. These are inherited from the
classical inequalities
\(\operatorname{tw}(G)\le Z(G)\) and \(P(G)\le Z(G)\); see, for example,
\cite{taklimi2013zero}.
\end{cor}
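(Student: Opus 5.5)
The corollary follows by chaining Theorem~\ref{prop:ZgZ} with two classical inequalities. I will take \(\operatorname{tw}(G)\le Z(G)\) and \(P(G)\le Z(G)\) as known results, citing \cite{taklimi2013zero}. Theorem~\ref{prop:ZgZ} applies to every connected graph of order \(n\ge2\) and gives \(Z_g(G)\ge Z(G)\). By transitivity, \(Z_g(G)\ge \operatorname{tw}(G)\) and \(Z_g(G)\ge P(G)\), and hence \(Z_g(G)\) is at least the maximum of the two.

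The cited inequalities need no extra hypotheses: they hold for all graphs, and in particular for connected graphs with at least one edge. The only hypothesis actually used is the one in Theorem~\ref{prop:ZgZ}, which matches the statement here exactly.

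For completeness I would recall briefly why \(P(G)\le Z(G)\) holds. Take a minimum zero forcing set \(S\) and any chronological list of forces that colours \(G\). Each vertex forces at most once, because after \(u\to v\) the vertex \(u\) has no white neighbours left. Each vertex is forced at most once, because it is coloured only once. So the forcing chains starting at the vertices of \(S\) form \(|S|\) vertex-disjoint induced paths covering \(V(G)\), which gives \(P(G)\le|S|=Z(G)\).

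The treewidth bound is the less elementary of the two. It is usually proved through the inequality \(Z(G)\ge M(G)\ge\operatorname{tw}(G)\), where \(M(G)\) is the maximum nullity, or by a direct construction of a path decomposition from a forcing process. I would simply cite it rather than reprove it. There is no real obstacle here; the only care needed is to confirm that the classical statements use the same definitions of treewidth and path cover number (induced paths) as intended.
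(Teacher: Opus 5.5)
Your proof is correct and is exactly the paper's argument: you chain $Z_g(G)\ge Z(G)$ from Theorem~\ref{prop:ZgZ} with the classical bounds $\operatorname{tw}(G)\le Z(G)$ and $P(G)\le Z(G)$. One correction to your aside: $\operatorname{tw}(G)\le Z(G)$ is not proved via $M(G)\ge\operatorname{tw}(G)$, which is not an established inequality (I believe that, combined with $\operatorname{tw}(G)+\operatorname{tw}(\overline{G})\ge n-2$, it would settle the still-open graph complement conjecture for maximum nullity), so rely instead on the path-decomposition argument $Z(G)\ge\operatorname{pw}(G)\ge\operatorname{tw}(G)$ that you also mention.
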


Next, we compute $Z_g(G)$ for some families of graphs.

\begin{prop}\label{prop:path}
For every path \(P_n\) with \(n\geq 2\), 
$
Z_g(P_n)=2.
$
\end{prop}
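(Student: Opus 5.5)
By Lemma~\ref{lem:2} we already have \(Z_g(P_n)\ge 2\). For the reverse inequality it suffices, by Lemma~\ref{lem:one-one-retention}, to exhibit a single legal play on \(P_n\) whose outcome is \((1,1)\). Then \((1,1)\in\mathcal O(P_n)\) and \(Z_g(P_n)=2\) follows immediately, with no further analysis of optimal play. So the whole proof reduces to constructing one legal play in which each player seeds exactly once.

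Label the path \(v_1v_2\cdots v_n\). For \(n=2\), Alice seeds \(v_1\) and Bob seeds \(v_2\). Bob has no nonblocking seed available, so part~(ii) of the Force Preservation Rule permits this move, and the outcome is \((1,1)\).

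For \(n\ge3\), Alice seeds the end vertex \(v_1\) and Bob seeds the other end vertex \(v_n\). We must check that Bob's seed is legal. Alice's only legal force is \(v_1\to v_2\), and seeding \(v_n\) does not block it, since \(v_n\ne v_2\) when \(n\ge3\). Thereafter the two players alternately force inward: Alice forces \(v_1\to v_2\to v_3\to\cdots\) and Bob forces \(v_n\to v_{n-1}\to\cdots\).

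The step needing care is showing that both players always have a legal force until the path is completely coloured. At any stage the coloured vertices form a blue prefix \(v_1,\dots,v_i\) and a red suffix \(v_j,\dots,v_n\) with \(i<j\). While \(j-i\ge2\), the blue endpoint \(v_i\) has exactly one white neighbour, \(v_{i+1}\), because \(v_{i-1}\) is already coloured. Symmetrically, the red endpoint \(v_j\) has the single white neighbour \(v_{j-1}\). So whoever is to move has a force available.

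When \(j-i=2\), only one white vertex \(v_{i+1}\) remains, and the player to move forces it. This is legal by part~(i) of the Force Preservation Rule, even though it removes the opponent's competing force.

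Since every move after the two seeds is a force, the outcome of this play is \((1,1)\), and Lemma~\ref{lem:one-one-retention} completes the proof. Note that the convention that colouring a forceable target counts as a force guarantees that these moves are counted as forces rather than seeds.
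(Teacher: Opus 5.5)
Your proposal is correct and follows the paper's proof: Alice seeds one end, Bob seeds the other, both force inward, and Lemma~\ref{lem:one-one-retention} is applied to the resulting \((1,1)\) play. You also spell out details the paper leaves implicit, namely that Bob's seed is legal (including the \(n=2\) safe-seed exception) and that the mover always has a force available until the path is fully coloured.
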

\begin{proof}
Label the vertices \(v_1,\ldots,v_n\) in order. Alice can seed \(v_1\), after
which Bob can seed \(v_n\). Both players can then use forces only, advancing
from the two ends until the path is coloured. This legal play has outcome
\((1,1)\), so Lemma~\ref{lem:one-one-retention} gives \(Z_g(P_n)=2\).
\end{proof}

\begin{prop}\label{prop:cyc}
If $C_n$ is a cycle of length $n\geq 3$, then
$ Z_g(C_n)=2$.
\end{prop}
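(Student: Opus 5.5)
The plan is to exhibit a single legal play on \(C_n\) with outcome \((1,1)\) and then invoke Lemma~\ref{lem:one-one-retention}, exactly as in the proof of Proposition~\ref{prop:path}. The only real work is to check that each move in the play is legal, in particular that no seed move breaks part~(ii) of the Force Preservation Rule.

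Label the vertices \(v_1,\ldots,v_n\) cyclically. Alice seeds \(v_1\). Since \(n\ge3\), the vertex \(v_1\) has two white neighbours, \(v_2\) and \(v_n\). So after this move Alice has no legal force, and no seed of Bob can block one of her forces. Bob therefore may seed \(v_2\), the neighbour of \(v_1\) on the clockwise side. Now \(v_1\) has the unique white neighbour \(v_n\), and \(v_2\) has the unique white neighbour \(v_3\). When \(n=3\) these coincide, and Alice simply forces \(v_1\to v_3\), ending the game with outcome \((1,1)\).

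For \(n\ge4\), the players alternate forces: Alice extends a blue arc counterclockwise from \(v_1\) (\(v_1\to v_n\), \(v_n\to v_{n-1}\), \(\ldots\)), and Bob extends a red arc clockwise from \(v_2\) (\(v_2\to v_3\), \(v_3\to v_4\), \(\ldots\)). The invariant I would maintain by induction on the number of moves is the following: the coloured vertices form a contiguous arc with a blue end and a red end, and the white vertices form the complementary arc. Hence the current blue endpoint has exactly one white neighbour, and so does the current red endpoint, so each player always has a legal force. By part~(i) of the Force Preservation Rule a player may always perform their own legal force, even if it blocks the opponent. This matters only at the very end, when one white vertex remains and it is the common target of both endpoints; whoever moves then forces it.

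With the invariant in hand, every move after the two seeds is a force. The completed play therefore has outcome \((1,1)\), and Lemma~\ref{lem:one-one-retention} gives \(Z_g(C_n)=2\). I expect the main obstacle to be the legality check for Bob's seed. It is resolved by the observation that Alice has no legal force at that moment, which is exactly where the hypothesis \(n\ge3\) is used.
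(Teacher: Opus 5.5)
Your proposal is correct and follows essentially the same route as the paper. Alice seeds a vertex, Bob seeds an adjacent vertex, and the two forcing chains advance in opposite directions, with $C_3$ finishing on Alice's first force; Lemma~\ref{lem:one-one-retention} then turns this $(1,1)$ play into $Z_g(C_n)=2$. Your explicit legality check for Bob's seed and your arc invariant fill in details the paper leaves implicit.
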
 
\begin{proof}
Let \(v\) be Alice's first seed, and let Bob seed a neighbour \(w\) of
\(v\). Alice can force from \(v\) away from \(w\). If any vertices remain,
Bob can force from \(w\) away from \(v\), and the two forcing chains then
advance in opposite directions until the cycle is coloured. (For \(C_3\),
Alice's first force finishes the game.) This legal play has outcome \((1,1)\),
so Lemma~\ref{lem:one-one-retention} gives \(Z_g(C_n)=2\).
\end{proof}

\begin{prop}\label{prop:star}
Let \(n\geq3\). Then
\[
Z_g(K_{1,n})=
\begin{cases}
n, & \text{if }n\text{ is odd},\\[2mm]
n-1, & \text{if }n\text{ is even}.
\end{cases}
\]
\end{prop}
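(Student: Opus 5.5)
Throughout, let \(c\) be the centre and \(\ell_1,\dots,\ell_n\) the leaves; the graph has \(n+1\) vertices. By Corollary~\ref{cor:force-count}, seed counts are determined by force counts, so the plan is to understand which forces can ever occur.

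\textbf{Forces.} A leaf can force only \(c\), and only while \(c\) is white. The centre can force only when exactly one leaf is white, and then only that leaf. Hence every game contains at most one force into \(c\) and at most one force out of \(c\). Therefore \(F\le 2\) and the total number of seeds is at least \(n-1\), which matches \(Z(K_{1,n})=n-1\) from Theorem~\ref{prop:ZgZ}.

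\textbf{Parity of the last move.} After \(k\) moves, \(n+1-k\) vertices are white, so the \((n+1)\)-st move colours the last vertex. It is Alice's move exactly when \(n\) is even. The last vertex is always a leaf if \(c\) was coloured earlier. That leaf is forced only if the player making the last move owns the centre. Otherwise the mover must seed it, which is permitted by rule (ii) because no other seed exists.

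\textbf{Case \(n\) odd.} Here Bob makes the last move. Alice's only possible force is therefore into \(c\), so her seed count is at least \(\lceil (n+1)/2\rceil-1\).
\begin{enumerate}
\item Alice attains this bound. She seeds a leaf, creating \(\ell\to c\). On turn~2 Bob has no red vertex, and by rule (ii) he may not seed \(c\), since \(n\ge3\) leaves nonblocking leaf seeds. So Bob seeds a leaf, and Alice forces \(c\) on turn~3 by rule (i).
\item Consequently, in every pair retained at the root Alice has exactly one force. That force must be into \(c\), since she cannot force the last leaf.
\item Then \(c\) is blue, Bob has no force at all, and he must seed the final leaf. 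So every optimal outcome has \(F=1\), total \(n\), and \(Z_g=n\).
\end{enumerate}
The delicate step is the claim that Alice never forces anything except \(c\). This follows from the structural observation together with the parity fact that the final move is Bob's.

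\textbf{Case \(n\) even.} Theorem~\ref{prop:ZgZ} gives \(Z_g\ge n-1\). It suffices to exhibit a play with \(F=2\) whose outcome survives backward induction. The intended play is:
\begin{enumerate}
\item Alice seeds a leaf, and Bob seeds a leaf.
\item Alice forces \(c\).
\item All remaining moves are leaf seeds until one white leaf is left at Alice's turn, and she forces it with \(c\to\ell\).
\end{enumerate}
Alice's two forces are the maximum possible for anyone, so her coordinate is minimal at each of her nodes.

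The main obstacle is retention at Bob's nodes, since I cannot invoke Lemma~\ref{lem:one-one-retention} here. I would show that at each of Bob's nodes on this line, Bob's minimum attainable seed count is \(n/2\), meaning he cannot secure any force.
\begin{enumerate}
\item At turn~2 he has no red vertex. Whatever leaf he seeds, Alice's optimal replies all force \(c\) blue on turn~3, since that is her unique way to reach two forces.
\item Once \(c\) is blue, Bob has no forces at all.
\end{enumerate}
Hence Bob's coordinate equals \(n/2\) in every retained outcome, and the pair \(\bigl(n/2-1,\;n/2\bigr)\) is retained, giving total \(n-1\). I would also double-check that rule (ii) never forces Bob into a blocking seed that changes the count. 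It cannot: any forced blocking seed is still a seed, and the parity of the last move is fixed.
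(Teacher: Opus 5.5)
Your proposal is correct and follows essentially the same route as the paper. Both arguments use the same three ingredients: the classification of forces into the single force into \(c\) and the single force out of \(c\) (which can only be the last move), the parity of the final move, and Alice's strategy of seeding a leaf so that the Force Preservation Rule pushes Bob onto a leaf before she forces \(c\). Your retention argument at Bob's nodes (no retained continuation gives Bob a force, by the two-force cap) and your explicit check that every retained outcome in the odd case has total \(n\) are, if anything, slightly more complete than the paper's appeal to the symmetry of Bob's seed choices.
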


\begin{proof}
Let \(c\) be the centre and let
\(L=\{\ell_1,\ldots,\ell_n\}\) be the leaves, and write \(F_A\) for
Alice's number of force moves. Every force in a star is either a leaf forcing
\(c\), which can occur at most once, or \(c\) forcing
the final white leaf, which can also occur at most once. Hence Alice performs
at most two forces in any play. If \(n=2k+1\), then the star has even order
and Bob makes the final move. In that case Alice cannot perform both types of
force: after Alice colours \(c\), its force becomes available only when one
white leaf remains, immediately before Bob's final move. Bob may then seed
that last leaf because no nonblocking seed target exists. Thus
\[
F_A\leq
\begin{cases}
2,&n=2k,\\
1,&n=2k+1.
\end{cases}
\]
Proposition~\ref{prop:force-count} therefore gives
\[
|S_A|\geq
\begin{cases}
k-1,&n=2k,\\
k,&n=2k+1.
\end{cases}
\]

Alice attains these bounds by seeding a leaf \(\ell_1\). This creates the
force \(\ell_1\to c\), so the Force Preservation Rule prevents Bob from
seeding \(c\). Bob has no force and must seed another leaf, say \(\ell_2\),
after which Alice performs \(\ell_1\to c\). At every Alice position on this
continuation, following the displayed strategy attains her global lower bound
above, so the continuation is retained. At every Bob position, Bob has no
force and all legal seed choices are symmetric.

There are now \(n-2\) white leaves and it is Bob's turn. If \(n=2k\),
the players make \(2k-3\) further seed moves before Alice forces the final
leaf. The retained outcome is \((k-1,k)\), with total \(n-1\). If
\(n=2k+1\), all \(2k-1\) remaining leaves are seeded, giving the retained
outcome \((k,k+1)\), with total \(n\). The stated formula follows.
\end{proof}

\begin{prop}\label{prop:complete}
For $n\geq 3$,
$
Z_g(K_n)=Z(K_n)=n-1$.
\end{prop}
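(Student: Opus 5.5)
The plan is to show that \emph{every} completed game on \(K_n\) has exactly one force move. Proposition~\ref{prop:force-count} then forces every terminal outcome to have total \(n-1\). Since \(\mathcal O(K_n)\) is a nonempty set of terminal outcomes, \(Z_g(K_n)=n-1\) follows directly. The lower bound is also consistent with Theorem~\ref{prop:ZgZ}, because \(Z(K_n)=n-1\) is the classical value.

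\textbf{Step 1: when forces are possible.} In \(K_n\) every vertex \(u\) is adjacent to all other vertices. So the white neighbours of a coloured vertex \(u\) are exactly the white vertices of \(K_n\). Hence a force \(u\to v\) is legal precisely when exactly one white vertex remains, and \(u\) is any vertex of the mover's colour. Consequently, while at least two vertices are white, neither player has a legal force. In particular, no opponent force exists to be blocked, so rule~(ii) of the Force Preservation Rule never restricts seeding. Every white vertex is then a legal seed, and the first \(n-1\) moves are all seeds.

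\textbf{Step 2: the last move.} At move \(n\) exactly one white vertex \(v\) remains. We check that the player to move already owns a coloured vertex. Since \(n\ge3\), at least two moves have been made, so Alice (move~1) and Bob (move~2) have each seeded a vertex (Lemma~\ref{lem:2}). Thus the mover has a legal force onto \(v\). By the convention that colouring the target of one's own legal force counts as a force move, the final move is a force.

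\textbf{Step 3: conclusion.} Every completed game therefore has \(F(K_n)=1\). By Proposition~\ref{prop:force-count}, \(|S_A|+|S_B|=n-1\) for every terminal position. Hence every pair in \(\mathcal O(K_n)\) has total \(n-1\), and \(Z_g(K_n)=n-1=Z(K_n)\).

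The only delicate point is Step~1's interaction with the Force Preservation Rule and the seed-versus-force convention, which must be checked carefully. Once it is observed that no force ever exists before the final move, both issues vanish, so I expect no real obstacle.
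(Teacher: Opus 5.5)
Your proof is correct and follows the paper's argument: in \(K_n\) no force exists while at least two vertices are white, so the first \(n-1\) moves are seeds and the final move is a force, and every completed game therefore has seed total \(n-1\). The paper leaves implicit your check that the final mover already owns a coloured vertex, which is exactly where \(n\geq 3\) is needed.
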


\begin{proof}
In \(K_n\), every coloured vertex is adjacent to every white vertex. Hence no
force is possible while at least two white vertices remain. The first \(n-1\)
moves are therefore necessarily seeds, independent of the players' choices.
When one white vertex remains, any coloured vertex can force it, so every
completed game uses exactly \(n-1\) seeds. Thus
\[
Z_g(K_n)=n-1=Z(K_n).
\]
\end{proof}

\subsection{Complete bipartite graphs}\label{subsec:complete-bipartite}

This subsection determines the game zero forcing number of every complete bipartite graph.
The proof is most transparent in terms of force counts. By
Corollary~\ref{cor:force-count}, the backward-induction rule may equivalently
be expressed by retaining the outcomes that maximise the mover's own number
of forces. For a position \(P\), let \(\FF(P)\) denote the resulting retained
set of future force-count pairs. The number of future turns belonging to each
player is fixed by the number of white vertices and the player to move, so
this force-count formulation is equivalent at every nonterminal position, not
only at the root.

The following elementary observation will be used repeatedly.

\begin{lem}[Guarantees survive backward induction]\label{lem:guarantee}
Suppose a player $P$ has a strategy from a position $R$ that guarantees at
least $r$ of $P$'s future force moves against every legal response.  Then every
pair in $\FF(R)$ gives $P$ at least $r$ future forces.  In particular, if $P$
can guarantee the largest number of forces possible in the whole game, every
outcome retained at $R$ gives $P$ that number.
\end{lem}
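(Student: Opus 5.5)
The plan is to argue by induction on the number of remaining moves, that is, on the number of white vertices at the position \(R\). Throughout we use the force-count formulation \(\FF\) justified just before the lemma via Corollary~\ref{cor:force-count}. Precisely, we prove the following statement for every position \(R\) and every \(r\ge0\): if player \(P\) has a strategy from \(R\) guaranteeing at least \(r\) future forces of \(P\) against every legal response, then every pair in \(\FF(R)\) gives \(P\) at least \(r\) future forces. The base case is a terminal position. There the only pair is \((0,0)\), and a guarantee of \(r\) forces can only hold with \(r=0\), so the claim is immediate.

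For the inductive step, fix a nonterminal \(R\) and a guaranteeing strategy \(\sigma\) for \(P\). Two cases arise according to who moves at \(R\).

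\emph{Case 1: \(P\) is to move.} Let \(m=\sigma(R)\). Then \(\sigma\), restricted to the subtree at \(R_m\), guarantees \(P\) at least \(r\) future forces from \(R_m\), counting the force \(m\) itself if \(m\) is a force move. More precisely, it guarantees \(r-\epsilon\) forces from \(R_m\), where \(\epsilon\in\{0,1\}\) records whether \(m\) is a force. By induction every pair in \(\FF(R_m)\) gives \(P\) at least \(r-\epsilon\) forces, and hence at least \(r\) once the move \(m\) is added. So the union \(\mathcal U(R)\) contains a pair whose \(P\)-coordinate is at least \(r\). The retention rule keeps only the pairs maximising \(P\)'s own coordinate, so every retained pair has \(P\)-coordinate at least \(r\).

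\emph{Case 2: the opponent is to move.} For every legal move \(m\), the strategy \(\sigma\) still guarantees \(r\) forces from \(R_m\), since \(\sigma\) works against every response. By induction every pair in every \(\FF(R_m)\) gives \(P\) at least \(r\) future forces. The retained set \(\FF(R)\) is a subset of the union of these sets, so the bound persists whatever the opponent's tie-retention selects.

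The "in particular" clause follows by taking \(r\) to be the maximum number of forces \(P\) can make in any play from \(R\). Every retained pair gives \(P\) at least \(r\) forces, and no play gives more, so every retained pair gives exactly \(r\).

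The only delicate point is bookkeeping. One must be careful that \(\FF\) records future forces, shifting by the current move when it is a force, and that the equivalence with the seed-minimising rule holds at every position, which the paper noted beforehand. No genuine obstacle is expected.
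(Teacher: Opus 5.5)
Your proof is correct and is essentially the paper's argument: a backward induction in which, at $P$'s nodes, the child chosen by $\sigma$ supplies a pair meeting the bound, so the max-retention rule forces every retained pair to meet it, while at opponent nodes every child satisfies the bound and filtering cannot break it. The only cosmetic difference is that you quantify the induction hypothesis over all positions and guarantee levels, whereas the paper fixes $\sigma$ and inducts over $\sigma$-consistent nodes with a residual guarantee $g(Q)$.
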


\begin{proof}
Fix a guaranteeing strategy $\sigma$ for $P$.  Call a node
\emph{$\sigma$-consistent} if it can be reached when $P$ follows $\sigma$ and
the opponent makes arbitrary legal moves.  For each such node $Q$, let $g(Q)$
be the number of additional forces that $\sigma$ guarantees to $P$ from $Q$.
We prove by backward induction that every pair retained at $Q$ has
$P$-coordinate at least $g(Q)$.

The assertion is immediate at a terminal node.  Suppose first that $P$ moves
at $Q$.  Let $Q'$ be the child chosen by $\sigma$, and include in the force
coordinate the possible force made on the edge from $Q$ to $Q'$.  By induction,
every pair retained at $Q'$ gives $P$ at least the residual guarantee there, so
the union considered at $Q$ contains a pair giving $P$ at least $g(Q)$.  The
backward-induction rule retains exactly the pairs with maximal $P$-coordinate.
All retained pairs at $Q$ therefore have the same $P$-coordinate, and that
coordinate is at least $g(Q)$.

Now suppose the opponent moves at $Q$.  Strict alternation implies that each
child is either a $P$-node or a terminal node, and every legal child is
$\sigma$-consistent.  The strategy guarantees $g(Q)$ against every such
choice.  By induction, every pair in the union of the children's retained
sets gives $P$ at least $g(Q)$ forces.  Filtering that union by the opponent's
coordinate can only remove pairs; it cannot invalidate the inequality for a
pair that remains.  This completes the induction.  Applying it at $R$ gives
the result.
\end{proof}

\subsubsection{The two-force cap}

Let the parts of $K_{m,n}$ be $X$ and $Y$.  A force whose target is in $X$ can
occur only when $X$ has exactly one white vertex.  Once that force is made,
$X$ is full and no later force can have a target in $X$.  The same holds for
$Y$.  Hence every play satisfies
\begin{equation}\label{eq:two-force-cap}
F_A+F_B\leq 2.
\end{equation}

The ordinary zero forcing number is standard; see, for example, \cite{aim2008zero,taklimi2013zero}:
\begin{equation}\label{eq:classical}
Z(K_{m,n})=m+n-2.
\end{equation}
Indeed, colouring all but one vertex in each part is a zero forcing set.
Conversely, with at least three vertices initially white, either both parts
contain at least two white vertices, so no force is possible, or one part has
one white vertex and the other has at least two.  In the latter case the
singleton part may be filled, but the process then stops with at least two
white vertices in the other part.

It remains to prove that optimal play retains an outcome with two force moves.

\subsubsection{A two-pile parity strategy}

The next lemma is the central strategy argument.  The two bipartition classes
are viewed as piles of white vertices.

\begin{lem}[Two-pile parity lemma]\label{lem:parity}
Consider a position on $K_{m,n}$ satisfying the following conditions.
\begin{enumerate}
\item Both parts contain at least two white vertices.
\item A designated player $D$ owns at least one coloured vertex in each part.
\item Player $D$ is scheduled to make the final move of the game.
\end{enumerate}
Then $D$ can guarantee both remaining force moves.
\end{lem}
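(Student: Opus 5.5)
The plan is to give $D$ an explicit strategy and check it by tracking the numbers of white vertices in the two parts. Write $a$ and $b$ for the numbers of white vertices in $X$ and $Y$, and let $W=a+b$. The first step is a basic observation about forces in $K_{m,n}$. A coloured vertex $y\in Y$ is adjacent to all of $X$ and to nothing in $Y$, so it has exactly one white neighbour precisely when $a=1$, and symmetrically for vertices of $X$. By hypothesis~2, $D$ owns a vertex in each part. Hence, at any position on $D$'s turn in which some part has exactly one white vertex, $D$ has a legal force into that part. Rule~(i) of the Force Preservation Rule lets $D$ play this force regardless of the opponent's competing forces. Conversely, no force of either player is legal while both parts contain at least two white vertices. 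Since every move colours exactly one vertex and $D$ makes the final move (hypothesis~3), it is $D$'s turn exactly when $W$ is odd.

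\emph{Phase 1 (both parts have at least two white vertices).} Suppose it is $D$'s turn. Then $W$ is odd and $W\ge4$, so $W\ge5$, and some part has at least three white vertices. $D$ seeds a white vertex in such a part. Both parts still contain at least two white vertices, and since no opponent force is legal, the seed is unrestricted. On the opponent's turn in this phase the opponent has no force, so the opponent must seed. If both parts still have at least two white vertices afterwards, the phase continues. Otherwise one part, say $X$, now has exactly one white vertex, and $Y$ still has at least two; in particular at least three white vertices remain, so the game has not ended. It is then $D$'s turn, with $W-1$ white vertices and $W$ even. $D$ forces the last white vertex of $X$, which is the first of the two possible forces (there are at most two by \eqref{eq:two-force-cap}). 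The phase must end this way, because each move lowers $W$ and $D$ never brings a part below two white vertices.

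\emph{Phase 2.} Now $X$ is full and $Y$ has $c=W-2$ white vertices, where $c$ is even and $c\ge2$, and it is the opponent's turn. As long as $Y$ has at least two white vertices, no force of either player exists. Every move, by either player, is therefore a seed that lowers $c$ by one. The opponent moves exactly when $c$ is even, so the move that brings $Y$ down to one white vertex is the opponent's. On the following turn $D$ forces the last vertex of $Y$, which is the second force.

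The main point to verify carefully is the parity bookkeeping: the transition out of Phase~1 must always occur on the opponent's move, and never on $D$'s. It does, because $D$ seeds only in a part with at least three white vertices, and such a part exists whenever $W$ is odd with both parts at least two. I also need to confirm that the Force Preservation Rule never forbids one of $D$'s moves. This holds because $D$'s seeds occur only when no forces exist, and $D$'s forces are always permitted by rule~(i).
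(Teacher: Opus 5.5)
Your proof is correct and follows essentially the same route as the paper. You maintain the invariant that each part has at least two white vertices at the start of every opponent turn, by having $D$ seed a part with at least three white vertices (which exists because the white count is odd on $D$'s turns); hence the opponent always creates the singleton, $D$ forces it, and parity gives $D$ the final force. Your Phase~2 just spells out in more detail the paper's closing remark that the last move in the remaining part is a force by $D$.
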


\begin{proof}
First suppose the opponent moves next. Since both parts have at least two
white vertices, no force exists and the opponent must seed. If that move
leaves exactly one white vertex in a part, $D$ forces that vertex immediately.
Otherwise it is now $D$'s turn with at least two white vertices in each part.
Thus it is enough to describe the strategy beginning on a turn of $D$.

When $D$ moves with at least two white vertices in each part, the number of
white vertices is odd because $D$ is the eventual last mover. Therefore one
part has at least three white vertices. Player $D$ seeds a vertex in such a
part. The chosen part still has at least two white vertices, and the other part
is unchanged, so both parts still have at least two white vertices.

Thereafter maintain the following invariant at the beginning of every
opponent turn: each part has at least two white vertices. If the opponent's
move leaves exactly one white vertex in a part, $D$ forces that vertex on the
next turn. This is an own legal force and is therefore permitted even if the
opponent has a competing force. If the opponent leaves at least two white
vertices in each part, it is again $D$'s turn with an odd number of white
vertices. One part has at least three white vertices, so $D$ seeds that part
and restores the invariant.

The game is finite, so eventually the opponent leaves exactly one white vertex
in a part. Player $D$ forces that vertex and thereby completes the part. All
remaining white vertices lie in the other part. Player $D$ is scheduled to
make the final move and now owns a vertex in the completed part, including the
vertex just forced. Hence the final move is also a force by $D$.
\end{proof}

Assume from now on that $2\leq m\leq n$ and put $N=m+n$.

\begin{prop}[Odd order, smaller part at least three]
\label{prop:odd-large}
If $N$ is odd and $m\geq 3$, Alice can guarantee both forces.  Thus the retained
force pair is $(2,0)$.
\end{prop}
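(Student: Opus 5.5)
The plan is to give Alice an explicit opening that reaches either the hypotheses of Lemma~\ref{lem:parity} with $D$ equal to Alice, or a position where Alice can take both forces directly. Lemma~\ref{lem:guarantee} then shows that every retained pair gives Alice $2$ forces, and the cap \eqref{eq:two-force-cap} gives Bob $0$, so the retained force pair is $(2,0)$. Since $N$ is odd, Alice makes the final move. Hence any position in which Alice owns a vertex of a part that has just been completed, with all remaining white vertices in the other part, ends with an Alice force. This is the closing step of Lemma~\ref{lem:parity}, and I will reuse it.

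\textbf{Opening.} Alice seeds some $x\in X$. Then $X$ has $m-1\ge2$ white vertices and $Y$ has $n\ge m\ge3$, so Alice has no force and Bob, owning nothing, must seed. I split on Bob's reply.

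\textbf{Case 1: Bob seeds in $Y$.} Now $X$ has $m-1\ge 2$ white vertices and $Y$ has $n-1$.
\begin{itemize}
\item If $n=3$, then $x$ has exactly one white neighbour, and Alice forces it, completing $Y$ while owning a vertex there.
\item Otherwise Alice seeds in $Y$.
\begin{itemize}
\item If this leaves at least two white vertices in each part, Lemma~\ref{lem:parity} applies with Bob to move.
\item If it leaves exactly one white vertex $w$ in $Y$, then Alice has the force $x\to w$. Bob has no force, since his $Y$-vertex sees at least two white vertices of $X$. Seeding in $X$ is nonblocking and available, so by the Force Preservation Rule Bob cannot seed $w$. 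Alice then forces $w$, completing $Y$ while owning a vertex there.
\end{itemize}
\end{itemize}

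\textbf{Case 2: Bob seeds in $X$.} Now $Y$ has $n\ge3$ white vertices, and Alice seeds $y\in Y$.
\begin{itemize}
\item If $X$ still has at least two white vertices, Lemma~\ref{lem:parity} applies.
\item If $m=3$, $X$ has a single white vertex $u$ and Alice has the force $y\to u$. Bob has no force, since he owns no vertex of $Y$. Seeding in $Y$ is nonblocking and available because $Y$ still has at least two white vertices, so Bob may not seed $u$. Alice forces $u$, completing $X$ while owning a vertex there.
\end{itemize}

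\textbf{Conclusion.} In every branch Alice is guaranteed two forces, which is the maximum allowed by \eqref{eq:two-force-cap}. Lemma~\ref{lem:guarantee} then shows every retained outcome gives Alice $2$ forces, hence Bob $0$.

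The main obstacle is the bookkeeping of the Force Preservation Rule in the degenerate branches ($m=3$ or small $Y$). There one must check both that Bob has no force of his own that could block Alice, and that a nonblocking seed genuinely exists. I expect the condition $m\ge3$ to be used exactly to guarantee that second part still has at least two white vertices at those moments.
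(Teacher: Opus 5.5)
Your proof is correct and follows essentially the same route as the paper. Alice seeds $X$ and then $Y$. Lemma~\ref{lem:parity} covers every branch except $m=3$ with Bob opening in $X$, which is settled by the Force Preservation Rule and the final-move parity. Lemma~\ref{lem:guarantee} together with \eqref{eq:two-force-cap} then yields $(2,0)$.

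One cleanup: since $N$ is odd and $3\le m\le n$, you have $n\ge4$, so your $n=3$ branch and the ``exactly one white vertex in $Y$'' sub-branch of Case~1 never occur and should simply be deleted. The $n=3$ branch also miscounts, since $x$ would still see two white vertices of $Y$ there, not one.
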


\begin{proof}
Alice is the player scheduled to move last.  She first seeds a vertex of the
smaller part $X$.  Her second move seeds a vertex of $Y$.

If Bob's first seed was in $Y$, the remaining white counts are $m-1$ and
$n-2$.  These are at least two: $m\geq3$, and $n\geq4$ because $m+n$ is odd.
Alice owns a vertex in each part, so Lemma~\ref{lem:parity} applies.

If Bob's first seed was in $X$ and $m\geq4$, the remaining white counts are
$m-2$ and $n-1$, again at least two, and the same lemma applies.

It remains to consider $m=3$ when Bob also begins in $X$.  After Alice's second
seed, one white vertex remains in $X$.  Alice owns a vertex in $Y$ and hence
has a force to that singleton.  Bob owns no vertex in $Y$, so this is an
opponent-only force from Bob's point of view.  Since white vertices in $Y$ are
safe seed targets, the force-preservation rule compels Bob to seed $Y$.
Alice then forces the last vertex of $X$.  There are $n-2$ white vertices in
$Y$.  Since $3+n$ is odd, $n-2$ is even.  Bob moves next, so Alice makes the
last move in $Y$, and that move is a force from her coloured vertex in $X$.

Alice has therefore guaranteed two forces, the global maximum in
\eqref{eq:two-force-cap}.  Lemma~\ref{lem:guarantee} gives the retained pair
$(2,0)$.
\end{proof}

\begin{prop}[Even order, smaller part at least four]
\label{prop:even-large}
If $N$ is even and $m\geq4$, Bob can guarantee both forces after every first
move of Alice.  Thus the retained force pair is $(0,2)$.
\end{prop}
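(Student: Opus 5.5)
With $N$ even, Bob makes the final move. The plan is to show that, whatever Alice's first seed, Bob can reach a position satisfying the hypotheses of Lemma~\ref{lem:parity} with himself as $D$. Lemma~\ref{lem:guarantee} then converts this guarantee, which attains the cap \eqref{eq:two-force-cap}, into the retained pair $(0,2)$.

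Let Alice's first seed lie in one part; call it $P$ and the other part $Q$, so $(P,Q)$ is either $(X,Y)$ or $(Y,X)$. Bob seeds a vertex of $Q$. Since $N$ is even and $m\ge4$, we have $n\ge4$ as well.

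Consider Alice's second move. No force exists, because both parts still contain at least two white vertices: one part has at least $m-1\ge3$ and the other at least $n-1\ge3$. So Alice seeds, and there are two cases.

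\emph{Alice seeds $P$.} The white counts are $|P|-2\ge2$ in $P$ and $|Q|-1\ge3$ in $Q$. Bob seeds a vertex of $P$; this is nonblocking, since no forces exist. Bob now owns a vertex in each part, and the white counts are $|P|-3$ and $|Q|-1$. This is fine when $|P|\ge5$. If $|P|=4$, then $P=X$ with $m=4$, leaving one white vertex in $P$. In that case Bob instead seeds $Q$ and waits, and I will have to handle this subcase separately (see below).

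\emph{Alice seeds $Q$.} The white counts are $|P|-1\ge3$ and $|Q|-2\ge2$. Bob seeds $P$, leaving $|P|-2\ge2$ and $|Q|-2\ge2$. He owns a vertex in each part, so Lemma~\ref{lem:parity} applies with Alice to move, which the lemma allows.

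The main obstacle is the case where Alice's seeds both lie in $X$ with $m=4$. Then Bob's seed in $X$ would leave a singleton in $X$ that Alice, who owns a vertex of $Y$ only if she has seeded there, might exploit. More precisely, Bob wants a vertex in $X$ without handing Alice a force.

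I would first try the following ordering: Bob seeds $X$ as his \emph{first} move whenever Alice starts in $Y$, and seeds $Y$ when Alice starts in $X$. In the latter case, if Alice seeds $X$ again there are two white vertices left in $X$. Bob seeds $X$, leaving one white vertex in $X$, and now both players own vertices of $Y$, so both have a force to the $X$-singleton. If Alice takes it, Bob still owns vertices in both parts; the only remaining white vertices are in $Y$, and Bob moves last, so he makes the final force. That gives Bob only one force, so this line must be avoided.

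Instead, Bob should seed $Y$ again, keeping two white vertices in $X$ and $n-2\ge2$ in $Y$ with Alice to move. Whoever reduces $X$ to a singleton lets the other force it. I would use the parity of the white counts, as in the proof of Lemma~\ref{lem:parity}, to argue that Bob can always reply in $Y$ (there are at least three white vertices there on his turns, by the odd count) until Alice is compelled to seed $X$. Bob then forces the $X$-singleton from his $Y$ vertex and finishes $Y$ with the final move. Checking this parity bookkeeping is the step I expect to require care.
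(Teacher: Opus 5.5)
Your generic cases are correct and coincide with the paper's argument. Bob seeds the part opposite Alice's first seed, then seeds the other part on his second turn, and Lemma~\ref{lem:parity} finishes. The gap is the subcase in which Alice's first two seeds lie in the four-vertex part $X$. There the proposal goes wrong in two places.

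\textbf{The line you discard is the right one.} After Alice seeds $X$, Bob seeds $Y$, Alice seeds $X$, Bob seeds $X$, your claim that ``both players own vertices of $Y$'' is false: both of Alice's seeds are in $X$. So only Bob has a force to the singleton of $X$. Alice has no force of her own, and the $|Y|-1\geq3$ white vertices of $Y$ are nonblocking seeds. Part (ii) of the Force Preservation Rule therefore forbids her to seed the singleton, and she must seed $Y$. Bob then forces the singleton under part (i). The $|Y|-2$ white vertices left in $Y$ are even in number with Alice to move, so Bob makes the last move, and it is a force from his vertex in $X$. This protected-force argument is exactly how the paper closes the case.

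\textbf{Your replacement strategy does not guarantee two forces.} After Bob's second seed in $Y$, Alice seeds $Y$, so she now owns a vertex in each part. While $X$ still has two white vertices, the white count of $Y$ is odd at each of Bob's turns. Oddness only gives at least one white vertex, not at least three, and the count does reach one on Bob's turn. At that point Alice's vertices in $X$ have a force to the singleton and Bob has none, so the preservation rule compels Bob to seed $X$. Alice then takes one of the two forces now available to her. If Bob instead seeds $X$ earlier, Alice immediately forces the $X$-singleton from her vertex in $Y$.

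Concretely, on $K_{4,4}$:
\begin{enumerate}[(a)]
\item Alice seeds $x_1$, Bob seeds $y_1$, Alice seeds $x_2$, Bob seeds $y_2$.
\item Alice seeds $y_3$. Now Alice has the force $x_1\to y_4$ and Bob has no force, so Bob must seed $x_3$.
\item Alice and Bob then take one force each, giving the pair $(1,1)$.
\end{enumerate}
So the parity bookkeeping you flagged is exactly where your argument breaks. The subcase is only settled by the protected-force line you rejected.
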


\begin{proof}
Bob is scheduled to make the final move.  After Alice's first seed, call its
part $X$ and call the other part $Y$.  Bob seeds $Y$.  On Bob's second turn he
seeds $X$, so he then owns a vertex in both parts.

If Alice's second seed was in $Y$, each part has lost two vertices.  Both still
have at least two white vertices, and Lemma~\ref{lem:parity} applies.  If
Alice's second seed was in $X$ and $|X|\geq5$, the remaining white counts in
$X$ and $Y$ are at least two, so the lemma again applies.

The only remaining case is $|X|=4$ and Alice's first two seeds both lie in
$X$.  After Bob's second seed, one white vertex remains in $X$.  Bob owns a
vertex in $Y$ and Alice does not, so Bob's force to the singleton of $X$ is
protected.  Alice must seed $Y$, after which Bob forces the last vertex of
$X$.  Since $N$ and $|X|$ are even, the number $|Y|-2$ of remaining white
vertices is even.  Alice moves next, so Bob makes and forces the last move in
$Y$.

Thus Bob guarantees the maximum of two forces after every first move of Alice.
Backward induction retains $(0,2)$.
\end{proof}

\begin{prop}[The $K_{3,n}$ even-order case]
\label{prop:three}
If $m=3$ and $N$ is even, each player can guarantee at least one force.  Hence
the retained force pair is $(1,1)$.
\end{prop}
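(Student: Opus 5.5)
The plan is to show separately that Alice can guarantee at least one force and that Bob can guarantee at least one force. Lemma~\ref{lem:guarantee} then says every retained force pair gives each player at least one force. Combined with the two-force cap \eqref{eq:two-force-cap}, this forces the retained pair to be exactly $(1,1)$. Throughout, $|X|=3$ and $|Y|=n$ with $n$ odd, so $N$ is even and Bob makes the final move. A force into a part requires owning a vertex of the other part and that part having exactly one white vertex.

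\emph{Bob's guarantee.} Bob aims to own a coloured vertex in each part. His first seed goes in the part not containing Alice's first seed. On each later turn he seeds the part where he still has no vertex. If at any point one of his own forces is legal, he performs it instead, which already gives him a force. Since he makes the last move and the last white vertex lies in some part, owning a vertex in both parts lets him force it. I must check that the Force Preservation Rule never stops Bob from seeding the part he needs. A blocking seed could only matter when Alice has a force into a singleton of that part. In that situation, either Bob can seed a nonblocking vertex there, or the singleton is the only white vertex of that part. In the latter case, after the part is filled, Bob's ownership of the opposite part still lets him make the final force, unless the game ends in a way I must rule out by a short case check.

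\emph{Alice's guarantee.} This is the main obstacle, because she does not move last and the small part $X$ is where the early action happens. My first attempt is this: Alice seeds a vertex $y\in Y$, so any later singleton in $X$ is a force target for her. If Bob replies in $X$, Alice seeds $X$ as well, leaving one white vertex in $X$. Bob owns nothing in $Y$, so this force is Alice's alone. The Force Preservation Rule makes Bob seed $Y$, which has $n-2\ge1$ white vertices; I need $n\ge3$ here, with $n=3$ handled directly. Alice then forces the singleton of $X$.

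The delicate case is when Bob replies in $Y$, so both players own a vertex of $Y$. Then whoever reduces $X$ to one white vertex hands the opponent the force into $X$. I would handle this by a parity argument on the white vertices of $Y$, in the spirit of Lemma~\ref{lem:parity}. Alice seeds $X$ once, so she owns a vertex in each part, and thereafter seeds $Y$ while Bob avoids $X$. Since forces into $Y$ need an owned $X$ vertex, I would track who first faces the choice between reducing $X$ and reducing $Y$ to a singleton. In every branch I expect Alice either to obtain the $X$-singleton force or to complete $Y$ by a force. If this direct strategy fails in some branch, I would instead try starting with Alice seeding $X$ and mirror the analysis.
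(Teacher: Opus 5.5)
\textbf{Gap: Alice's guarantee in the branch where Bob replies in $Y$ is never proved.} Your overall plan matches the paper's: separate one-force guarantees for each player, then Lemma~\ref{lem:guarantee} and \eqref{eq:two-force-cap}. Your opening for Alice is the paper's with her first two seeds swapped. The lines $Y,X,X$ and $Y,Y,X$ reach exactly the positions the paper reaches by $X,X,Y$ and $X,Y,Y$.

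In the $Y,Y,X$ branch you only say what you expect and offer a fallback. The tool you propose, a parity argument in the spirit of Lemma~\ref{lem:parity}, cannot close this case alone. The missing idea is part (ii) of the Force Preservation Rule.

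After $Y,Y,X$, Alice owns a vertex in each part, Bob owns nothing in $X$, and $X$ has two white vertices. Alice now seeds $Y$ whenever she has no force.
\begin{itemize}
\item If Bob ever seeds $X$, Alice forces the resulting $X$-singleton.
\item Otherwise $X$ keeps two white vertices. Since Bob moves last, Bob's turns are exactly those at which $Y$ has an odd number of white vertices. Hence $Y$ becomes a singleton on Bob's turn.
\item At that moment the $Y$-singleton is the target of Alice's force from her $X$-vertex. Bob has no force, because he owns nothing in $X$ and $X$ has two white vertices. The white vertices of $X$ are nonblocking seed targets.
\item Rule (ii) therefore compels Bob to seed $X$, and Alice forces the last vertex of $X$.
\end{itemize}
Parity alone does not give this. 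If Bob were allowed to seed the $Y$-singleton, Alice would be forced to seed $X$, and Bob would force its last vertex. Alice would then end that line with no force.

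\textbf{Secondary flaw: the stated fallback in Bob's argument is false, though the strategy is repairable.} Suppose Alice has a force into the singleton of the part $P$ where Bob owns nothing. Then there is no other white vertex of $P$ for Bob to seed. If Alice then fills $P$, every remaining white vertex lies in the other part, where Bob's vertices have no white neighbours. Bob would finish with no force at all, not the final one.

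Your strategy survives only because this situation never arises on move four, the one turn on which Bob must enter his missing part.
\begin{itemize}
\item After $X,Y,X$, Bob forces the $X$-singleton.
\item After $X,Y,Y$, Bob seeds one of the two white vertices of $X$, which blocks nothing.
\item After $Y,X,X$, Alice's only force targets $X$, and Bob seeds one of the at least two white vertices of $Y$.
\item After $Y,X,Y$, Bob forces into $Y$ if $n=3$, and $Y$ has at least three white vertices if $n\ge5$.
\end{itemize}
With this four-case check, your opening for Bob (seed the part opposite Alice's first seed) is a valid alternative to the paper's (Bob always seeds $X$).
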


\begin{proof}
Here $n$ is odd.  Alice seeds the three-vertex part $X$.

If Bob also seeds $X$, Alice seeds $Y$.  One white vertex remains in $X$, and
Alice's force to it is protected because Bob owns no vertex in $Y$.  Bob must
seed $Y$, after which Alice forces the last vertex of $X$.

If Bob instead seeds $Y$, Alice also seeds $Y$.  Alice then owns a vertex in
both parts, while Bob owns no vertex in $X$.  On later turns Alice plays in $Y$
whenever she has no force.  If Bob ever seeds $X$, Alice forces the remaining
vertex of $X$.  If Bob never seeds $X$, eventually $Y$ becomes a singleton.
If this occurs on Alice's turn, she forces it from her vertex in $X$.  If it
occurs on Bob's turn while he still owns no vertex in $X$, the singleton of
$Y$ is an opponent-only force target and the two white vertices of $X$ are
safe.  Bob is compelled to seed $X$, after which Alice forces the remaining
vertex of $X$.  Alice therefore guarantees one force.

Bob guarantees one force by seeding $X$ on his first turn, regardless of
Alice's first move.  Suppose first that Alice also began in $X$.  If Alice
seals $X$ by seeding its last white vertex on move three, all remaining moves
lie in $Y$.  Bob is the final player and owns a vertex in $X$, so the final
move in $Y$ is his force.  If Alice instead enters $Y$ on move three, her force
to the singleton of $X$ is an opponent-only force from Bob's perspective, and
the white vertices of $Y$ are the only safe seed targets.  The preservation
rule therefore compels Bob to seed $Y$ on move four.  In particular, Bob now
owns a vertex in each part.  If Alice takes the force in $X$, Bob makes the
final force in $Y$; if Alice delays it, Bob may take the force in $X$ on his
next turn.  Thus both ways in which Alice can complete $X$, by a seed on move
three or by a later force, leave Bob a force.

Suppose instead that Alice begins in $Y$, and Bob again seeds $X$.  If Alice
next seeds $X$, then either she later seals $X$, leaving Bob the final force in
$Y$, or the singleton of $X$ survives until Bob can force it.  If Alice next
seeds $Y$, then for $n\geq5$ Bob seeds $Y$ as well.  At that point at least two
white vertices remain in each part, Bob owns both parts, and
Lemma~\ref{lem:parity} applies.  When $n=3$, Bob instead already has a force
from his vertex in $X$ to the final white vertex of $Y$ and takes it.  Hence
Bob obtains at least one force on every legal line.  Notice that whenever the
singleton of $X$ survives beyond move three, the preservation restriction has
put a Bob-owned vertex in $Y$ by move four; this is what makes Bob's later
force to $X$ available.

At each of Bob's first-turn positions, backward induction retains a force
count of at least one for Bob.  Alice's initial strategy guarantees at least
one for Alice.  Together with the two-force cap, this forces the pair $(1,1)$.
\end{proof}

\subsubsection{The exceptional family \texorpdfstring{$K_{2,n}$}{K2,n}}

Let $|X|=2$ and $|Y|=n$.  The first two moves are necessarily seeds.  For a
two-letter word such as $XY$, the first letter denotes Alice's first part and
the second denotes Bob's first part.  No force has yet occurred, so the table
below records the retained force pair from the resulting position.

\begin{prop}\label{prop:two-table}
For $n\geq4$, set-valued backward induction gives
\begin{center}
\begin{tabular}{c cc}
\toprule
First two seed parts & $n$ odd & $n$ even\\
\midrule
$XX$ & $(1,0)$ & $(0,1)$\\
$XY$ & $(1,1)$ & $(0,2)$\\
$YX$ & $(2,0)$ & $(1,1)$\\
$YY$ & $(1,1)$ & $(1,1)$\\
\bottomrule
\end{tabular}
\end{center}
\end{prop}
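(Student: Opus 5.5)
We would treat each of the eight cells separately, working entirely in the force-count formulation $\FF(P)$ with the two-force cap \eqref{eq:two-force-cap}. In each cell the target pair $(a,b)$ has $a+b\le 2$. It suffices to show that Alice can guarantee $a$ forces and Bob can guarantee $b$ forces from the position after the first two seeds. Then Lemma~\ref{lem:guarantee} bounds every retained pair from below coordinatewise, and the cap (or, when $a+b<2$, a direct argument that no further force is available) forces equality.

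The structure of forces in $K_{2,n}$ drives every case. A force into $X$ needs exactly one white vertex of $X$ and comes from a vertex of $Y$ owned by the mover. A force into $Y$ needs $X$ full and exactly one white vertex of $Y$, and comes from an owned vertex of $X$. The total order is $n+2$, so Alice moves last exactly when $n$ is odd.

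\textbf{Case $XX$.} After the first two seeds $X$ is full, so the only possible force is into the final white vertex of $Y$. Both players own a vertex of $X$, so whoever makes the last move takes this force, and earlier moves cannot prevent it. This gives $(1,0)$ for $n$ odd and $(0,1)$ for $n$ even.

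\textbf{Case $XY$.} Bob owns $y_1$ and already has the force $y_1\to x_2$. Alice has no force, and seeding $x_2$ would block Bob's force while white vertices of $Y$ are nonblocking. So the Force Preservation Rule compels Alice to seed $Y$. Bob then forces $x_2$, guaranteeing one force.

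Afterwards $X$ is full and both players own an $X$-vertex. The last mover therefore forces the final $Y$-vertex, giving $(1,1)$ for $n$ odd and $(0,2)$ for $n$ even. We must check that Alice cannot obtain a force into $X$ first; this holds because she owns no $Y$-vertex before move $3$.

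\textbf{Case $YX$.} This is symmetric. Alice's force $y_1\to x_2$ is protected, so Bob must seed $Y$ and Alice forces $x_2$. The last mover again takes the final $Y$-force, giving $(2,0)$ for $n$ odd and $(1,1)$ for $n$ even. When a player guarantees the global maximum $2$, we use the "in particular" clause of Lemma~\ref{lem:guarantee}.

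\textbf{Case $YY$.} This is the main obstacle. Here both players own a $Y$-vertex and $X$ is entirely white, so whoever first seeds $X$ hands the opponent the force into the other $X$-vertex. Meanwhile no force into $Y$ exists until $X$ is full. Naively both players seed $Y$ until it is exhausted, and then the player on move must seed $X$ and get nothing. The rescue is timing.

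We claim each player $P$ guarantees one force with the following strategy:
\begin{itemize}
\item if the opponent seeds a vertex of $X$, then $P$ immediately forces the other vertex of $X$;
\item otherwise $P$ seeds $Y$ while at least two white $Y$-vertices remain;
\item if $P$ is to move with exactly one white $Y$-vertex and $X$ still white, then $P$ seeds a vertex of $X$.
\end{itemize}

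In the last situation the opponent can either force the other $X$-vertex, after which $X$ is full and $P$ forces the last $Y$-vertex from its $X$-vertex, or do something else, after which $P$ itself forces into $X$. We must check that every opponent deviation still leaves $P$ one force, including the Force Preservation constraints when the opponent's $X$-force is pending. We must also check that $P$ reaches the "one white $Y$-vertex on my turn" situation, or else the opponent seeds $X$ first; we would do this by a short parity case analysis on $n$. Both guarantees together with the cap yield $(1,1)$ for both parities.
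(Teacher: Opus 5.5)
Your rows $XX$, $XY$ and $YX$ are essentially right, but the $YY$ row has a genuine gap. The claim that each player can guarantee one force from $YY$ is false, and the parity check you postpone is exactly where it breaks. From $YY$ there are $n-2$ white $Y$-vertices, $X$ is white, and Alice moves. No force exists until some part has a single white vertex, so the only way to change which player first faces ``one white $Y$-vertex, $X$ white'' is to seed $X$, and that hands the opponent the force into the other $X$-vertex. For even $n$ that position falls to Bob. He simply seeds the last $Y$-vertex, so Alice must open $X$ and Bob forces its other vertex. In $Q_4$, for instance, Alice seeds $Y$, Bob seeds the last $Y$-vertex, Alice seeds $x_1$ and Bob forces $x_2$, giving $(0,1)$; opening $X$ at once gives $(0,2)$. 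So Alice guarantees nothing. Symmetrically, for odd $n$ Alice can deny Bob every force. In $Q_5$ she seeds $Y$; if Bob seeds $Y$ she seeds the last $Y$-vertex, and if Bob seeds $X$ she forces the other $X$-vertex and still makes the final move. Your strategy has no rule for ``$Y$ full, $X$ white, my turn'', which is precisely where this play leaves it.

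Consequently Lemma~\ref{lem:guarantee} and the cap cannot give the $YY$ entry, because $(1,1)$ survives only through the set-valued tie convention. In $Q_4$, after Alice seeds $Y$, Bob gets exactly one force whether he seeds $X$ (outcome $(1,1)$) or the last $Y$-vertex (outcome $(0,1)$). Both pairs are therefore retained at his node, and Alice then selects the branch containing $(1,1)$. In $Q_5$ the roles reverse: Alice is indifferent between $(1,1)$ and $(1,0)$, and Bob selects. Remark~\ref{rem:ties} shows that breaking these ties against the opponent would leave only $(0,1)$ at $Q_4$, so no guarantee argument can prove this row. You have to compute retained sets node by node, as the paper does, with base cases $Q_4,Q_5$ and an induction from $Q_{n-2}$ (Bob's $Y$ reply to Alice's $Y$ seed is strategically $Q_{n-2}$).

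There are also smaller slips. A force into $Y$ does not need $X$ full, since any coloured $X$-vertex forces the last white $Y$-vertex. For example, in $YX$ with $n=4$, after Alice seeds $y_2$ and Bob is compelled to seed $y_3$, Bob already has $x_1\to y_4$ while $x_2$ is white. After $YX$ it is Alice, not Bob, who moves, and she can take $y_1\to x_2$ at once. Finally, in $XY$ for odd $n$ and $YX$ for even $n$, the guarantee of the player not holding the $X$-force must be checked against the holder delaying it. It does hold, via the compelled $Y$ seeds, but you do not argue it.
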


\begin{proof}
After $XX$, the part $X$ is full and all $n$ white vertices lie in $Y$.
Alice moves next.  Both players own a vertex in $X$, so the player who makes
the last move in $Y$ obtains its single force.  This is Alice for odd $n$ and
Bob for even $n$.

After $XY$, one white vertex remains in $X$, and Bob owns a vertex in $Y$.
Thus Bob has a force to the singleton of $X$.  Alice owns no vertex in $Y$, and
the at least three white vertices of $Y$ are safe seed targets.  The
force-preservation rule compels Alice to seed $Y$.  Bob can then force the last
vertex of $X$.  If $n$ is even, Bob also makes the last move among the remaining
$n-2$ vertices of $Y$, so he obtains both forces.  If $n$ is odd, Alice makes
the final force in $Y$, so the pair is $(1,1)$.  In the odd case Bob cannot
obtain two: if he delays the force in $X$, Alice can take it on her next turn
and still has the final move in $Y$.  Hence Bob's immediate force is optimal.

After $YX$, Alice has a force to the singleton of $X$.  Taking it immediately
gives her both forces when $n$ is odd and gives one force to each player when
$n$ is even.  Alice may instead delay once by seeding $Y$.  At this first
delay, her force to $X$ is an opponent-only force for Bob, so the preservation
rule compels Bob to seed $Y$ in reply.  Alice now has a second decision point,
but the position has changed: Bob owns a vertex in $Y$ and therefore has his
own force to the singleton of $X$.

If Alice forces $X$ at this second decision point, she again obtains $(2,0)$
when $n$ is odd and $(1,1)$ when $n$ is even, by the parity of the remaining
vertices in $Y$.  If Alice delays a second time, Bob forces $X$ on his next
turn. Both vertices of $X$ are then Bob's. Alice owns no vertex in $X$, so her
vertices in $Y$ can never force, and she finishes with zero forces. Thus Alice
strictly prefers to force at her first or second decision point.  In either case the displayed pair is retained: $(2,0)$ for
odd $n$ and $(1,1)$ for even $n$.

It remains to analyse $YY$.  Let $Q_n$ denote this position.  Both players own
a vertex in $Y$, both vertices of $X$ are white, $n-2$ vertices of $Y$ are
white, and Alice moves.  The claim is
\begin{equation}\label{eq:Qn}
\FF(Q_n)=\{(1,1)\}\qquad(n\geq4).
\end{equation}
The proof is by induction in steps of two.

For $Q_4$, if Alice seeds $X$, Bob forces the remaining vertex of $X$ and also
makes the final force in $Y$, giving $(0,2)$.  If Alice seeds $Y$, Bob has two
relevant continuations.  He may seed $X$, after which Alice forces the other
vertex of $X$ and Bob forces the last vertex of $Y$, giving $(1,1)$.  Or he may
seed the last vertex of $Y$, after which play in $X$ gives $(0,1)$.  Bob has one
force in both continuations, so both survive at his node.  Alice retains the
continuation with her own force, giving $\FF(Q_4)=\{(1,1)\}$.

For $Q_5$, Alice may seed $X$.  Bob optimally forces the remaining vertex of
$X$, and Alice makes the final force among the three remaining vertices of
$Y$, giving $(1,1)$.  If Bob delays by seeding $Y$, Alice forces $X$ and Bob
gets no force, so Bob strictly prefers the immediate force.

If Alice instead begins by seeding $Y$, Bob has two part choices.  If Bob seeds
$X$, Alice immediately forces the remaining vertex of $X$.  Two white vertices
then remain in $Y$, Bob moves next, and Alice makes the final force in $Y$.
This gives $(2,0)$, so it cannot be Bob's optimal reply.

If Bob also seeds $Y$, the remaining position has two white vertices in $X$
and one in $Y$, with Alice to move.  If Alice seeds $X$, Bob forces the last
vertex of $X$ and Alice forces the last vertex of $Y$, giving $(1,1)$.  If
Alice seeds the last vertex of $Y$, the remaining play in $X$ gives $(1,0)$.
Alice is indifferent between these two continuations because she gets one
force in each, but at Bob's preceding node only $(1,1)$ maximises Bob's
coordinate.  Bob therefore retains the reply in $Y$, rather than his
$(2,0)$ reply in $X$.  Hence the branch in which Alice first seeded $Y$ also
contributes $(1,1)$ at $Q_5$, and $\FF(Q_5)=\{(1,1)\}$.

Assume now that $n\geq6$ and that~\eqref{eq:Qn} holds for $n-2$.  If Alice
seeds $X$, Bob can force the other vertex of $X$.  This gives $(0,2)$ when $n$
is even and $(1,1)$ when $n$ is odd, according to which player makes the last
move in $Y$.  These continuations are optimal for Bob: for even $n$ he has the
global maximum of two forces, while for odd $n$ delaying the force allows Alice
to take $X$ and leaves Bob with no force.

If Alice seeds $Y$, Bob has two part choices.  If Bob seeds $X$, Alice forces
the other vertex of $X$.  The resulting pair is $(1,1)$ when $n$ is even and
$(2,0)$ when $n$ is odd.  If Bob seeds $Y$, the resulting position is
strategically equivalent to $Q_{n-2}$.  It is strategically equivalent because
the numbers of already owned twins do not matter, only whether each player owns
a vertex in a part.  By induction this second choice gives $(1,1)$.

For even $n$, Bob gets one force after either of his choices following Alice's
$Y$ seed, so the set-valued rule retains the $(1,1)$ outcome.  Alice prefers
this to the $(0,2)$ outcome produced by seeding $X$.  For odd $n$, Bob strictly
prefers the recursive $(1,1)$ continuation to the $(2,0)$ continuation after
Alice's $Y$ seed; Alice's $X$ seed also gives $(1,1)$.  In either parity the
retained set at $Q_n$ is therefore exactly $\{(1,1)\}$.  This proves
\eqref{eq:Qn} and completes the table.
\end{proof}

\begin{prop}\label{prop:two}
For every $n\geq2$, optimal play on $K_{2,n}$ retains the force pair $(1,1)$.
\end{prop}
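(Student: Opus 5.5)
The plan is to treat $n\geq4$ by a single additional layer of backward induction at the root, using the table of Proposition~\ref{prop:two-table}, and to handle $n=2,3$ separately by direct inspection. Throughout I work with force-count pairs, which Corollary~\ref{cor:force-count} justifies.

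\textbf{Reduction to part choices.} By Lemma~\ref{lem:2} the first two moves are seeds. Any two white vertices in the same part of $K_{2,n}$ are twins, so the positions reached after the first two moves depend only on the two-letter word recording the parts seeded. Hence the root analysis reduces to Alice choosing $X$ or $Y$, followed by Bob choosing $X$ or $Y$. The retained set after each word is the corresponding entry of Proposition~\ref{prop:two-table}.

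\textbf{Root computation for $n\geq4$.}
\begin{itemize}
\item For $n$ odd: after Alice's $X$ seed, Bob compares $XX\mapsto(1,0)$ with $XY\mapsto(1,1)$. He retains only $(1,1)$. After Alice's $Y$ seed, Bob compares $YX\mapsto(2,0)$ with $YY\mapsto(1,1)$. Again only $(1,1)$ survives. Alice is then indifferent between her two moves, and the root retains $\{(1,1)\}$.
\item For $n$ even: after Alice's $X$ seed, Bob compares $XX\mapsto(0,1)$ with $XY\mapsto(0,2)$ and retains $(0,2)$. After Alice's $Y$ seed, both $YX$ and $YY$ give $(1,1)$, so $(1,1)$ is retained. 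Alice prefers one force to zero, so the root again retains exactly $\{(1,1)\}$.
\end{itemize}

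\textbf{Small cases.} For $n=2$, the graph $K_{2,2}=C_4$ has order four. Proposition~\ref{prop:cyc} together with Lemma~\ref{lem:one-one-retention} gives the seed pair $(1,1)$ in $\mathcal O$. By Proposition~\ref{prop:force-count} this is the force pair $(1,1)$. Since $(1,1)$ maximises both coordinates subject to the two-force cap~\eqref{eq:two-force-cap}, it is the only pair that can be retained.

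For $n=3$, I would redo the $2\times2$ root table by hand, since the five-vertex game tree is small.
\begin{itemize}
\item After $XX$, three white vertices remain in $Y$. Alice makes the last move and forces it, giving $(1,0)$.
\item After $XY$, Bob's force to the singleton of $X$ is protected, so Alice must seed $Y$. Bob then forces $X$, and Alice makes the final force in $Y$, giving $(1,1)$.
\item After $YX$, Alice forces $X$ at once. Bob must then seed one of the two white vertices of $Y$, and Alice forces the last one, giving $(2,0)$. I must check that delaying is never better for Alice.
\item After $YY$, one white vertex remains in $Y$ and two in $X$, with Alice to move. I will analyse this directly as in the $Q_5$ computation.
\end{itemize}
Bob's replies then select $(1,1)$ on both of Alice's first moves, exactly as in the odd case above.

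\textbf{Main obstacle.} The step needing the most care is the $YY$ subcase for $n=3$, which the induction in Proposition~\ref{prop:two-table} does not cover. There I will enumerate Alice's seeds in $X$ versus $Y$ explicitly and verify that Bob's optimal coordinate is $1$.
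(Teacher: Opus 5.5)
Your proposal is correct and follows the paper's proof: the $n\geq4$ root step is read off Proposition~\ref{prop:two-table} exactly as the paper does, and your $n=3$ values coincide with the paper's hand analysis of $K_{2,3}$. Those values are $XX\mapsto(1,0)$, $XY\mapsto(1,1)$, $YX\mapsto(2,0)$, and a $YY$ position that retains both $(1,1)$ and $(1,0)$ because Alice is indifferent there, so it is Bob's preceding node that discards $(2,0)$ and $(1,0)$.

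The only departure is $n=2$. Citing $C_4$ and Lemma~\ref{lem:one-one-retention} validly gives $(1,1)\in\mathcal O$, but your uniqueness argument needs sharpening. The two-force cap alone allows $(2,0)$, so you need the turn count $F_A,F_B\le1$. Even then, coordinatewise maximality does not stop a pair such as $(1,0)$ from surviving the filter at the root. You must also note that every first seed of Alice admits a $(1,1)$ continuation, which holds by vertex-transitivity of $C_4$ and is what the paper's direct same-part versus opposite-part comparison supplies.
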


\begin{proof}
First take $n\geq4$.  If $n$ is even and Alice begins in $X$, Bob compares the
$XX$ and $XY$ rows of Proposition~\ref{prop:two-table}.  He chooses $XY$
because it gives him two forces rather than one, producing $(0,2)$.  If Alice
begins in $Y$, both of Bob's part choices retain an outcome with pair $(1,1)$.
Alice therefore begins in $Y$, since that gives her one force instead of zero.

If $n$ is odd and Alice begins in $X$, Bob chooses $Y$, since $XY$ gives him
one force and $XX$ gives him none.  If Alice begins in $Y$, Bob again chooses
$Y$, since $YY$ gives him one force and $YX$ gives him none.  Both choices of
Alice therefore lead to $(1,1)$.

For $K_{2,2}$, suppose Alice begins in one part.  If Bob seeds the same part,
Bob later obtains one force and Alice obtains none.  If Bob seeds the opposite
part, each player obtains one force.  Bob has one force in both continuations,
so both survive his tie.  Alice's initial node retains the continuation in
which she also obtains a force.  Thus the retained pair is $(1,1)$.

For $K_{2,3}$, if Alice begins in $X$, Bob obtains a force only by beginning in
$Y$, and the resulting pair is $(1,1)$.  If Alice begins in $Y$, a seed by Bob
in $X$ gives $(2,0)$.  If Bob also seeds $Y$, then two vertices of $X$ and one
vertex of $Y$ are white, with Alice to move.  Seeding $X$ leads to $(1,1)$,
whereas seeding the last vertex of $Y$ leads to $(1,0)$.  Alice gets one force
in both, so this fixed $YY$ position retains both pairs.  At Bob's preceding
node, however, only $(1,1)$ maximises Bob's coordinate.  Bob therefore chooses
the $YY$ branch and retains $(1,1)$.  Hence the initial retained pair is again
$(1,1)$.
\end{proof}

\subsubsection{Conclusion and force allocation}

\begin{thm}\label{thm:complete-bipartite}
For all integers $m,n\geq2$,
\[
Z_g(K_{m,n})=m+n-2.
\]
\end{thm}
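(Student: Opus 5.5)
The plan is to pass from seeds to forces and then apply the case analysis already carried out in this subsection. By Proposition~\ref{prop:force-count}, every completed game on $K_{m,n}$ satisfies
\[
|S_A|+|S_B|=m+n-(F_A+F_B).
\]
Since $Z_g$ is the minimum total over retained outcomes, it suffices to show two things. First, every play has $F_A+F_B\le 2$. Second, the retained set at the root contains an outcome with $F_A+F_B=2$. The upper half, $Z_g(K_{m,n})\ge m+n-2$, follows from the two-force cap \eqref{eq:two-force-cap}. Alternatively it follows from Theorem~\ref{prop:ZgZ} together with \eqref{eq:classical}. Either gives a lower bound valid for every play, hence for every retained outcome.

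For the reverse inequality, assume without loss of generality $2\le m\le n$ and set $N=m+n$. Then split into the cases already treated.
\begin{enumerate}
\item If $N$ is odd and $m\ge3$, Proposition~\ref{prop:odd-large} shows the retained force pair is $(2,0)$.
\item If $N$ is even and $m\ge4$, Proposition~\ref{prop:even-large} gives $(0,2)$.
\item If $m=3$ and $N$ is even, Proposition~\ref{prop:three} gives $(1,1)$.
\item If $m=2$, Proposition~\ref{prop:two} gives $(1,1)$ for every $n\ge2$.
\end{enumerate}
These four cases are exhaustive. If $m=2$ we are in case 4. If $m=3$, then either $N$ is odd (case 1) or $N$ is even (case 3). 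If $m\ge4$, $N$ is either odd (case 1) or even (case 2). In every case a retained outcome has total force count $2$, and hence total seed count $m+n-2$.

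The one point requiring care is the passage from a retained force pair to a pair in $\mathcal O(K_{m,n})$. The force-count retained set $\FF$ at the root corresponds exactly to $\mathcal O(K_{m,n})$ under the affine map of Proposition~\ref{prop:force-count}. This holds because the number of turns of each player is fixed, and maximising one's own forces is the same as minimising one's own seeds (Corollary~\ref{cor:force-count}). So a retained pair $(F_A,F_B)$ with $F_A+F_B=2$ yields
\[
(\lceil N/2\rceil-F_A,\ \lfloor N/2\rfloor-F_B)\in\mathcal O(K_{m,n}),
\]
whose total is $N-2$. Combining this with the lower bound gives $Z_g(K_{m,n})=m+n-2$.

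The main obstacle is not in this assembly. It lies in the earlier case propositions, especially the $K_{2,n}$ table and the $K_{3,n}$ even case, where guarantees alone do not pin down the retained set and one must track ties explicitly. Since those results are stated earlier, the proof here reduces to checking that the case split is exhaustive and that the force/seed translation is legitimate at the root.
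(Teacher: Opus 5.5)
Your proposal is correct and follows the paper's proof essentially verbatim. It uses the same four-way case split on $m$ and the parity of $N=m+n$, invokes Propositions~\ref{prop:two}, \ref{prop:odd-large}, \ref{prop:three} and~\ref{prop:even-large}, and then converts force counts to seed counts with the two-force cap and Proposition~\ref{prop:force-count}; your explicit translation from the retained force pair back into $\mathcal O(K_{m,n})$ only spells out the equivalence the paper records at the start of the subsection.
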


\begin{proof}
Assume $2\leq m\leq n$ and let $N=m+n$.  If $m=2$,
Proposition~\ref{prop:two} gives two force moves under optimal play.  If
$m\geq3$ and $N$ is odd, Proposition~\ref{prop:odd-large} gives two forces.  If
$N$ is even and $m=3$, Proposition~\ref{prop:three} gives one force to each
player.  If $N$ is even and $m\geq4$, Proposition~\ref{prop:even-large} gives
two forces to Bob.  These cases exhaust all possibilities, so optimal play
always has
\[
F_A+F_B=2.
\]
Proposition~\ref{prop:force-count} now gives
\[
Z_g(K_{m,n})=N-(F_A+F_B)=m+n-2.
\]
This also agrees with the classical lower bound
$Z_g(K_{m,n})\geq Z(K_{m,n})=m+n-2$.
\end{proof}

The proof yields the stronger allocation statement below.

\begin{cor}
Let $2\leq m\leq n$ and $N=m+n$.  The retained force pair at the initial
position is
\[
(F_A,F_B)=
\begin{cases}
(1,1), & m=2,\\
(2,0), & m\geq3\text{ and }N\text{ is odd},\\
(1,1), & m=3\text{ and }N\text{ is even},\\
(0,2), & m\geq4\text{ and }N\text{ is even}.
\end{cases}
\]
\end{cor}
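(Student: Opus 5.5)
The corollary follows by collecting the retained force pairs already found in the case analysis behind Theorem~\ref{thm:complete-bipartite}; the only additional task is to check that each is the \emph{unique} retained pair at the root. Throughout we work in the force-count formulation $\FF(P)$ justified by Corollary~\ref{cor:force-count}, together with the two-force cap~\eqref{eq:two-force-cap}.

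\textbf{Case $m=2$.} Proposition~\ref{prop:two} states that optimal play on $K_{2,n}$ retains $(1,1)$. For $n\ge4$ this comes from the table in Proposition~\ref{prop:two-table} and from Bob's comparisons at his first node. For $n=2,3$ it comes from the direct analysis in the proof of Proposition~\ref{prop:two}.

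To get uniqueness at the root, we note that Alice always has a first move after which Bob's retained replies give her exactly one force. If $n$ is even, she seeds $Y$; if $n$ is odd, either part works. We also note that no first move gives her two forces. For $n$ odd and a seed in $Y$, the $YX$ reply yielding $(2,0)$ is discarded at Bob's node. Alice's retained set therefore consists exactly of the pairs with $F_A=1$, and each such pair is $(1,1)$.

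\textbf{Case $m\ge3$ and $N$ odd.} Proposition~\ref{prop:odd-large} shows that Alice guarantees two forces. By Lemma~\ref{lem:guarantee}, every retained pair then has $F_A\ge2$. The cap forces $F_B=0$, so the retained set is $\{(2,0)\}$.

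\textbf{Case $m\ge4$ and $N$ even.} Proposition~\ref{prop:even-large} shows that Bob guarantees two forces after every first move of Alice. Applying Lemma~\ref{lem:guarantee} at each of Bob's first-turn positions, every pair retained there is $(0,2)$. The union at the root is therefore $\{(0,2)\}$, and Alice's filter keeps it.

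\textbf{Case $m=3$ and $N$ even.} Proposition~\ref{prop:three} gives lower bounds on both sides. Alice guarantees one force, so by Lemma~\ref{lem:guarantee} every root pair has $F_A\ge1$. Bob guarantees one force from every first-turn position, so again by Lemma~\ref{lem:guarantee} every pair in the union at the root has $F_B\ge1$. With the cap $F_A+F_B\le2$, the only possible pair is $(1,1)$.

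\textbf{Main obstacle.} The delicate step is the $m=2$ uniqueness claim. The other three cases follow from guarantee arguments, but for $K_{2,n}$ neither player has a guarantee yielding the pair directly; the result depends on the exact set-valued tables. I would handle it by reading $\FF$ off the table row by row, together with the explicit small-case computations, checking at each node that ties, such as Bob's indifference in $Q_4$ and in $K_{2,2}$, never admit a pair other than $(1,1)$.

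Finally, combining the four cases with Proposition~\ref{prop:force-count} recovers $F_A+F_B=2$ in every case, consistent with Theorem~\ref{thm:complete-bipartite}.
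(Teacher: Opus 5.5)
Your proposal is correct and follows the paper's route. The paper treats the corollary as read off from Propositions~\ref{prop:two}, \ref{prop:odd-large}, \ref{prop:three} and~\ref{prop:even-large}, with Lemma~\ref{lem:guarantee} and the cap~\eqref{eq:two-force-cap} pinning the pair down exactly as you do.

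One small slip concerns $K_{2,2}$. There, Bob's retained replies after Alice's first seed are $(0,1)$ and $(1,1)$, so not every retained reply gives Alice a force, and ties do admit a pair other than $(1,1)$ at Bob's node. Your root conclusion still holds, because Alice's filter discards $(0,1)$, as the proof of Proposition~\ref{prop:two} notes.
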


\begin{remark}[The tie convention is essential]\label{rem:ties}
The set-valued retention of all outcomes tied in the mover's own coordinate is
not merely a bookkeeping choice.  The theorem can fail under a natural-looking
secondary preference.  For example, suppose that whenever a player is tied in
their own force coordinate, that player additionally minimises the opponent's
force coordinate.  In the $Q_4$ analysis above, Bob would then discard
$(1,1)$ in favour of $(0,1)$ after Alice seeds $Y$, and Alice would subsequently
discard $(0,2)$ in favour of $(0,1)$.  Thus $Q_4$ would retain only $(0,1)$.
Propagating this secondary rule through the root analysis of $K_{2,4}$ leaves
the force pair $(0,1)$ uniquely retained.  There would then be only one force
in total, and the resulting game parameter would be $6-1=5$, rather than
$6-2=4=Z(K_{2,4})$.  The conclusion of Theorem~\ref{thm:complete-bipartite} therefore
depends genuinely on the set-valued solution convention defined above.
\end{remark}

\section{Structural results}
We say that \(G\) admits an \emph{alternating two-chain forcing schedule} if
its vertices have an ordered partition
\[
V(G)=\{a_1,\ldots,a_p\}\mathbin{\dot\cup}\{b_1,\ldots,b_q\},
\qquad
p=\left\lceil\frac{|V(G)|}{2}\right\rceil,\quad
q=\left\lfloor\frac{|V(G)|}{2}\right\rfloor,
\]
with the following property. After Alice seeds \(a_1\) and Bob seeds \(b_1\),
the remaining vertices can be coloured in the turn order
\[
a_2,b_2,a_3,b_3,\ldots,
\]
truncated after all vertices are coloured, where \(a_i\) is the unique white
neighbour of \(a_{i-1}\) immediately before Alice's corresponding turn and
\(b_i\) is the unique white neighbour of \(b_{i-1}\) immediately before
Bob's corresponding turn.

\begin{thm}\label{thm:zg2-alternating}
Let \(G\) be a connected graph with order $n\ge 2$. Then \(Z_g(G)=2\) if
and only if \(G\) admits an alternating two-chain forcing schedule.
\end{thm}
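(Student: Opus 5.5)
We prove the two directions separately. Both rest on Lemma~\ref{lem:2} and Lemma~\ref{lem:one-one-retention}, together with the move count of Proposition~\ref{prop:force-count}.

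\emph{Sufficiency.} Suppose \(G\) admits an alternating two-chain forcing schedule. We check that the schedule is a legal play. Alice's first move is the seed \(a_1\) and Bob's is the seed \(b_1\). After that, each scheduled colouring \(a_i\) (respectively \(b_i\)) is by hypothesis the unique white neighbour of the previously coloured vertex \(a_{i-1}\) (respectively \(b_{i-1}\)) of the mover's colour. It is therefore a legal force of the mover. Part~(i) of the Force Preservation Rule says a player may always perform an own legal force, even one that blocks an opponent force, so the rule never forbids these moves. The seeds \(a_1\) and \(b_1\) are the first moves of each player and need a separate check under part~(ii). Alice's seed \(a_1\) is unconstrained, since no forces exist at that point. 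When Bob seeds \(b_1\), Alice may already have a force from \(a_1\), and \(b_1\) could block it, for instance if \(b_1\) is the unique white neighbour of \(a_1\). We rule this out from the schedule itself: Alice's next move \(a_2\) must be the unique white neighbour of \(a_1\) after \(b_1\) is coloured. Hence \(b_1\neq a_2\), and \(b_1\) does not create or destroy uniqueness in a way that blocks Alice's force. A block would mean that \(a_1\) has no legal force after \(b_1\), which contradicts the existence of the force \(a_1\to a_2\). The only exception is \(n=2\), where the seed \(b_1\) finishes the game and the claim is trivial. So the play is legal and has outcome \((1,1)\), and Lemma~\ref{lem:one-one-retention} gives \(Z_g(G)=2\).

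\emph{Necessity.} Suppose \(Z_g(G)=2\). Then \(\mathcal O(G)\) contains a pair with sum \(2\), and by Lemma~\ref{lem:2} that pair is \((1,1)\). Take any play realising it. By Lemma~\ref{lem:2} the first moves of Alice and Bob are seeds; call them \(a_1\) and \(b_1\). Since each player seeds exactly once, every later move is a force. We claim each player's forces form a single chain. Set \(a_i\) to be the vertex Alice colours on her \(i\)-th turn. At that turn the forcing vertex is some blue vertex \(u\) whose unique white neighbour is \(a_i\), and \(u\) need not be \(a_{i-1}\). So the play gives forcing sets, not necessarily chains, and this is the main obstacle. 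The definition of the schedule demands that \(a_i\) be forced from \(a_{i-1}\), so I would repair the play as follows.

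\begin{enumerate}[(a)]
\item \emph{Reduce to a counting argument.} Each vertex forces at most once in ordinary zero forcing, since after its force it has no white neighbours. The same holds here, because coloured vertices stay coloured.
\item \emph{Count forcing vertices.} Alice's \(p-1\) forces are performed by distinct blue vertices, and the blue vertex coloured last cannot have forced. A blue vertex could force only later in the game, and Alice's last force is her last move, so her last blue vertex never forces. That leaves \(p-1\) blue vertices available as forcers for \(p-1\) forces, so every blue vertex except the last forces exactly once.
\item \emph{Check the timing.} The blue vertices forcing must precede their targets. A forcing structure on \(p\) vertices in which each non-final vertex forces exactly once, and each non-seed is forced exactly once, is a union of chains. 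There is only one chain, because only one vertex, the seed \(a_1\), is unforced. So the forcing structure is a single path starting at \(a_1\).
\item \emph{Fix the order.} It remains to show that the path order agrees with the order of colouring. At Alice's \(i\)-th force the forcer must be the most recently coloured blue vertex. Every earlier blue vertex other than the most recent one has already forced, by induction along the chain, and so has no white neighbours. Hence the forcer is \(a_{i-1}\).
\end{enumerate}

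The same argument applies to Bob. The sizes \(p=\lceil n/2\rceil\) and \(q=\lfloor n/2\rfloor\) come from the turn counts in Proposition~\ref{prop:force-count}. This yields the schedule. The delicate points are the seed-blocking check in the sufficiency direction and the induction in step~(d).
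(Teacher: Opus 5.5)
Your proposal is correct and follows the paper's proof. For sufficiency you play the schedule, rule out Bob's seed $b_1$ blocking $a_1$'s force because $a_2$ must remain a white neighbour of $a_1$ (using the safe-seed exception when $n=2$), and apply Lemma~\ref{lem:one-one-retention}; for necessity you take a $(1,1)$ play and use that a vertex forces at most once, so each colour's forces form a single chronological path from its seed. Your steps (b)--(d) spell out the paper's one-line ``single chronological directed path'' claim, and the induction in (d) alone already suffices.
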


\begin{proof}
Suppose first that \(G\) has an alternating two-chain forcing schedule. Alice
and Bob seed \(a_1\) and \(b_1\), respectively, and then perform the indicated
forces. Bob's seed at \(b_1\) is legal: if \(n\geq3\) and Alice has a force
after her first move, its target cannot be \(b_1\), since the schedule requires
\(a_2\) to remain a white neighbour of \(a_1\); for \(n=2\), \(b_1\) is the only
white vertex and the safe-seed exception applies. The Force Preservation Rule
permits every force in the schedule. The outcome is \((1,1)\), so
Lemma~\ref{lem:one-one-retention} gives
\(Z_g(G)=2\).

Conversely, suppose \(Z_g(G)=2\). Some pair in \(\mathcal O(G)\) has total
\(2\), and Lemma~\ref{lem:2} implies that this pair is \((1,1)\). Consider a
play realising that outcome. Every move after the two seeds is a force. Once a
vertex forces its unique white neighbour, it has no white neighbours and can
never force again. The forcing edges of each colour therefore form a single
chronological directed path from that player's seed. Because turns alternate,
the vertices are revealed in the order
\[
a_1,b_1,a_2,b_2,\ldots.
\]
At each force, the next vertex on the corresponding path is the unique white
neighbour of its predecessor. The two paths consequently form an alternating
two-chain forcing schedule.
\end{proof}

\begin{remark}
The ordinary characterization \(Z(G)=2\) by two parallel paths \cite{row2012technique} records the
existence of two forcing chains but not the order in which the two players
must advance them. The alternating schedule in
Theorem~\ref{thm:zg2-alternating} is the extra game-theoretic condition.
\end{remark}

\begin{prop}\label{prop:kn-minor}
If a graph \(G\) has a \(K_n\) minor, then \(Z_g(G)\ge n-1\).
\end{prop}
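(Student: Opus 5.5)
The plan is to combine Theorem~\ref{prop:ZgZ} with the classical fact that the zero forcing number is bounded below by treewidth, and that treewidth is minor-monotone. By Theorem~\ref{prop:ZgZ}, \(Z_g(G)\ge Z(G)\). By the corollary following Theorem~\ref{prop:ZgZ}, \(Z(G)\ge\operatorname{tw}(G)\). So it suffices to show \(\operatorname{tw}(G)\ge n-1\) whenever \(G\) has a \(K_n\) minor.

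For that step I would use the standard minor-monotonicity of treewidth: if \(H\) is a minor of \(G\), then \(\operatorname{tw}(H)\le\operatorname{tw}(G)\). This holds because a tree decomposition of \(G\) induces one of \(H\). Deleting a vertex or an edge keeps the decomposition valid after removing the vertex from bags. Contracting an edge \(uv\) to a new vertex \(w\) is handled by replacing \(u\) and \(v\) by \(w\) in every bag. The bags containing \(w\) are then the union of two subtrees that intersect, since \(uv\) lies in some bag, so they still form a connected subtree. No bag grows, so the width does not increase.

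It remains to recall that \(\operatorname{tw}(K_n)=n-1\). The upper bound comes from a single bag. For the lower bound, use the Helly property for subtrees of a tree: the subtrees corresponding to the \(n\) vertices pairwise intersect, because every edge lies in some bag, so some bag contains all \(n\) vertices.

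Chaining these gives
\[
Z_g(G)\ge Z(G)\ge\operatorname{tw}(G)\ge\operatorname{tw}(K_n)=n-1.
\]
The only step needing care is minor-monotonicity under contraction, which is standard and can be cited. No game-theoretic argument is needed beyond Theorem~\ref{prop:ZgZ}. This is notable because the paper later shows that \(Z_g\) itself is not minor-monotone; the bound passes through \(Z\) and \(\operatorname{tw}\) precisely to avoid that issue.
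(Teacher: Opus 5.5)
Your proof is correct and follows essentially the paper's route: both reduce, via Theorem~\ref{prop:ZgZ}, to the classical bound \(Z(G)\ge n-1\) for graphs with a \(K_n\) minor. The paper simply cites that bound, while you derive it from \(\operatorname{tw}(G)\le Z(G)\), minor-monotonicity of treewidth, and \(\operatorname{tw}(K_n)=n-1\).
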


\begin{proof}
Let \(G\) be a graph with a \(K_n\) minor. Then \(Z(G)\ge n-1\)
\cite{taklimi2013zero}. By Theorem~\ref{prop:ZgZ}, the claim follows immediately.
\end{proof}
Next we show that the game zero forcing number is not minor-monotone.

\begin{prop}\label{prop:deletion-nonmonotone}
Vertex deletion and edge deletion can each either increase or decrease
\(Z_g\), even when the resulting graph is connected. Consequently,
\(Z_g\) is not minor-monotone.
\end{prop}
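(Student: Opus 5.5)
We prove the proposition with four small explicit examples, one for each combination of operation (vertex or edge deletion) and direction (increase or decrease). In every example the values come from results already established: Proposition~\ref{prop:complete}, Proposition~\ref{prop:star}, Example~\ref{exam:minimum-necessary}, and Lemma~\ref{lem:one-one-retention} (equivalently Theorem~\ref{thm:zg2-alternating}). No new backward-induction computation is needed.

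\emph{Decreases.} For vertex deletion, take \(K_n\) with \(n\ge4\). Deleting a vertex gives the connected graph \(K_{n-1}\). By Proposition~\ref{prop:complete}, \(Z_g\) drops from \(n-1\) to \(n-2\). For edge deletion, take \(K_5\) and delete one edge to obtain \(J=K_5-e\). By Proposition~\ref{prop:complete}, \(Z_g(K_5)=4\). By Example~\ref{exam:minimum-necessary}, \(Z_g(J)=3\). So edge deletion can decrease \(Z_g\).

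\emph{Increases.} The idea is to add structure to the star \(K_{1,3}\) so that an alternating two-chain schedule appears. By Proposition~\ref{prop:star}, \(Z_g(K_{1,3})=3\). Let \(c\) be the centre and \(\ell_1,\ell_2,\ell_3\) the leaves.

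For edge deletion, let \(H\) be \(K_{1,3}\) together with the extra edge \(\ell_2\ell_3\).
\begin{enumerate}[(a)]
\item Alice seeds \(\ell_1\), which creates her force \(\ell_1\to c\).
\item Bob seeds \(\ell_3\). This is legal because it does not block \(\ell_1\to c\): \(c\) remains the unique white neighbour of \(\ell_1\).
\item Alice forces \(\ell_1\to c\).
\item Bob's vertex \(\ell_3\) now has unique white neighbour \(\ell_2\), so Bob forces \(\ell_3\to\ell_2\).
\end{enumerate}
This legal play has outcome \((1,1)\), so Lemma~\ref{lem:one-one-retention} gives \(Z_g(H)=2\). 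Deleting the edge \(\ell_2\ell_3\) returns the connected graph \(K_{1,3}\), whose value is \(3\). So edge deletion can increase \(Z_g\).

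For vertex deletion, let \(G\) be \(K_{1,3}\) together with a new vertex \(w\) adjacent to \(\ell_2\) and \(\ell_3\).
\begin{enumerate}[(a)]
\item Alice seeds \(\ell_1\), and Bob seeds \(\ell_3\). As before, Bob's seed does not block \(\ell_1\to c\).
\item Alice forces \(\ell_1\to c\).
\item Bob forces \(\ell_3\to w\). This is legal because \(c\) is now coloured, so \(w\) is the only white neighbour of \(\ell_3\).
\item Alice forces \(c\to\ell_2\), since \(\ell_2\) is the only white neighbour of \(c\).
\end{enumerate}
Again the outcome is \((1,1)\), so \(Z_g(G)=2\). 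Deleting \(w\) gives the connected graph \(K_{1,3}\) with \(Z_g=3\).

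Finally, a minor-monotone parameter could not increase under any deletion. Each increase example therefore already shows that \(Z_g\) is not minor-monotone.

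The only step requiring care is checking each legal \((1,1)\) play against the Force Preservation Rule. Specifically, Bob's seed must not block Alice's pending force, and each claimed force must have a unique white neighbour at the moment it is made. Once those checks pass, Lemma~\ref{lem:one-one-retention} settles each value without any further analysis of the game tree.
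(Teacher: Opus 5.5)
Your proof is correct and has the same architecture as the paper's. It uses four small witnesses whose values come from Proposition~\ref{prop:complete}, Proposition~\ref{prop:star}, or a legal \((1,1)\) play plus Lemma~\ref{lem:one-one-retention}. Two witnesses coincide with the paper's. One is \(K_4\to K_3\), which you generalise to \(K_n\to K_{n-1}\). The other is your \(H\), which is the paper's paw \(Q\) with the same edge deleted, certified by a different valid play.

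The other two witnesses differ. For the vertex-deletion increase, the paper deletes the end leaf of the subdivided claw \(R\). Your \(4\)-cycle with a pendant vertex works just as well, though the paper's \(R\) has the advantage of being reused in Proposition~\ref{prop:subdivision-counterexample}.

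For the edge-deletion decrease, the paper uses \(K_4\to K_4-e\) (from \(3\) to \(2\)), certified by a \((1,1)\) play, so every value in its proof is elementary. Your \(K_5\to K_5-e\) (from \(4\) to \(3\)) instead leans on Example~\ref{exam:minimum-necessary}, which the paper supports only by an unwritten game-tree check. That value is right and is easily made self-contained:
\begin{enumerate}[(i)]
\item Every vertex of \(J\) has degree at least \(3\), so Alice cannot force on move \(3\).
\item On move \(5\), one of her two blue vertices is adjacent to the last white vertex, since the only non-edge is \(xy\), so she always forces then.
\item Hence every outcome has \(s_A=2\). Along the displayed play in which Bob forces \(x\to r\), the pair \((2,1)\) then survives each retention step exactly as in the proof of Lemma~\ref{lem:one-one-retention}.
\end{enumerate}
Your example gains a decrease above the floor value \(2\). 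The paper's keeps every certification at the level of a single explicit play.
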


\begin{proof}
For vertex deletion, let \(R\) be obtained from the claw \(K_{1,3}\), with
centre \(c\) and leaves \(x,y,z\), by subdividing \(cz\) with a new vertex
\(w\). The legal play
\[
\text{Alice seeds }x,\quad \text{Bob seeds }z,\quad
x\to c,\quad z\to w,\quad c\to y
\]
has outcome \((1,1)\). Lemma~\ref{lem:one-one-retention} therefore gives
\(Z_g(R)=2\). Deleting \(z\) leaves a claw, so
\[
Z_g(R-z)=3>2=Z_g(R)
\]
by Proposition~\ref{prop:star}. In the other direction,
Proposition~\ref{prop:complete} gives
\(Z_g(K_4)=3>2=Z_g(K_3)\), and \(K_3\) is obtained by deleting a vertex
of \(K_4\).

For edge deletion, let \(Q\) be the paw graph with triangle \(abc\) and
pendant edge \(cd\). Alice can seed \(a\), Bob can seed \(c\), and then
\(a\to b\) and \(c\to d\) finish with outcome \((1,1)\).
Lemma~\ref{lem:one-one-retention} gives \(Z_g(Q)=2\). Deleting \(ab\) leaves the claw centred at \(c\), so this
edge deletion increases the value from \(2\) to \(3\).

For the reverse direction, let \(D=K_4-e\). Write \(x,y\) for its
nonadjacent vertices of degree two and \(u,v\) for its vertices of degree
three. The legal play
\[
\text{Alice seeds }x,\quad \text{Bob seeds }u,\quad
x\to v,\quad u\to y
\]
has outcome \((1,1)\). Lemma~\ref{lem:one-one-retention} gives
\(Z_g(D)=2\). Thus deleting an edge
from \(K_4\) to obtain \(D\) decreases the value from \(3\) to \(2\).
The first vertex-deletion example also has \(R-z\) as a minor of \(R\)
with larger game value, disproving minor-monotonicity.
\end{proof}

In the next two propositions, we show that subdividing an edge of $G$ can either decrease or increase $Z_g(G)$.

\begin{prop}\label{prop:subdivision-counterexample}
The game zero forcing number is not nondecreasing under edge subdivision.
\end{prop}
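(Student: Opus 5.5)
We need a graph G and an edge e such that subdividing e gives G' with Z_g(G') < Z_g(G). The simplest route is to use the examples already computed.

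Candidate: the claw K_{1,3} has Z_g = 3 by Proposition~\ref{prop:star}. Subdividing one edge cz with a new vertex w gives exactly the graph R from the proof of Proposition~\ref{prop:deletion-nonmonotone}. There it was shown, via an explicit legal play with outcome (1,1) and Lemma~\ref{lem:one-one-retention}, that Z_g(R)=2. So subdividing an edge of K_{1,3} lowers the value from 3 to 2.

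The plan is therefore short:
\begin{enumerate}
\item Cite Proposition~\ref{prop:star} with $n=3$ to get $Z_g(K_{1,3})=3$.
\item Observe that subdividing $cz$ yields the graph $R$.
\item Recall the play in $R$: Alice seeds $x$, Bob seeds $z$, then $x\to c$, $z\to w$, $c\to y$.
\end{enumerate}
When reusing that play, we briefly recheck its legality under the Force Preservation Rule. After Alice seeds $x$, her force $x\to c$ exists, and Bob's seed at $z$ does not block it. After $x\to c$, vertex $c$ has the two white neighbours $y$ and $w$. Bob's force $z\to w$ is his own force, so it is always allowed. Then $c$ has the single white neighbour $y$, and Alice forces it. The outcome is (1,1), so Lemma~\ref{lem:one-one-retention} gives $Z_g(R)=2<3$.

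A second option is the cycle. $K_3$ has value 2, and its subdivision $C_4$ also has value 2, so no decrease occurs there. The claw example is the one to use.

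There is no real obstacle here. The only care needed is confirming that the move sequence obeys the Force Preservation Rule, which we have just checked.
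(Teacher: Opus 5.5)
Your proposal is correct and follows the paper's proof: subdivide $cz$ in $K_{1,3}$ to obtain the graph $R$, use Proposition~\ref{prop:star} for $Z_g(K_{1,3})=3$, and use the $(1,1)$ play with Lemma~\ref{lem:one-one-retention} for $Z_g(R)=2$. Your check that the play is legal under the Force Preservation Rule is accurate, though it only repeats what the paper had already established for $R$.
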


\begin{proof}
Let \(G=K_{1,3}\), with centre \(c\) and leaves \(x,y,z\), and subdivide
\(cz\) with a new vertex \(w\) to obtain \(G'\). The graph \(G'\) is the
graph \(R\) from the proof of Proposition~\ref{prop:deletion-nonmonotone},
where Lemma~\ref{lem:one-one-retention} gave \(Z_g(G')=2\). By
Proposition~\ref{prop:star}, \(Z_g(G)=3\). Hence
\[
Z_g(G')=2<3=Z_g(G).
\]
\end{proof}

\begin{prop}\label{prop:subdivision-increase}
Subdividing an edge can increase the game zero forcing number.
\end{prop}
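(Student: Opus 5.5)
We plan to exhibit a single explicit graph \(G\) with \(Z_g(G)=2\) that has an edge whose subdivision yields a graph with \(Z_g\ge 3\), relying only on results already established. The natural candidate is the graph \(D=K_4-e\) from the proof of Proposition~\ref{prop:deletion-nonmonotone}. As there, write \(x,y\) for its two nonadjacent vertices of degree two and \(u,v\) for its vertices of degree three. That proof already shows \(Z_g(D)=2\): it gives the legal play \(x\), \(u\), \(x\to v\), \(u\to y\) with outcome \((1,1)\) and then applies Lemma~\ref{lem:one-one-retention}.

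Next we subdivide the edge \(uv\) with a new vertex \(w\), obtaining \(D'\). Its edges are \(xu,xv,yu,yv,uw,wv\). The key observation is that every edge of \(D'\) joins \(\{u,v\}\) to \(\{x,y,w\}\). Moreover, each of \(u\) and \(v\) is adjacent to all of \(x,y,w\), and there are no edges inside either set. Hence \(D'\cong K_{2,3}\). We will state this identification explicitly, since it is the only point that needs checking.

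Theorem~\ref{thm:complete-bipartite} then gives \(Z_g(D')=Z_g(K_{2,3})=2+3-2=3\). Therefore
\[
Z_g(D')=3>2=Z_g(D),
\]
so subdividing the edge \(uv\) of \(D\) increases the game zero forcing number.

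There is no real obstacle here. The substantive work, namely the \(K_{2,n}\) analysis in Propositions~\ref{prop:two-table} and~\ref{prop:two}, has already been done. The only thing to get right is the choice of edge: subdividing the edge joining the two degree-three vertices is what produces a complete bipartite graph. As an independent sanity check that \(Z_g(D')\ne 2\), one could also note that the lower bound \(Z_g\ge Z\) from Theorem~\ref{prop:ZgZ} yields \(Z_g(D')\ge Z(K_{2,3})=3\) via \eqref{eq:classical}. This already suffices for the strict increase, without needing the full exact value.
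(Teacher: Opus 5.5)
Your proof is correct, and it takes a different route from the paper's.

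\textbf{The paper's route.} The paper uses the paw \(Q\) (triangle \(abc\) plus pendant edge \(cd\)) and subdivides the pendant edge. In that example \(Z(Q)=Z(Q')=2\), so the lower bound \(Z_g\ge Z\) cannot detect the increase. The paper therefore does two things:
\begin{enumerate}
\item It runs a short game-tree analysis to exhibit a retained outcome \((1,2)\in\mathcal O(Q')\).
\item It rules out \(Z_g(Q')=2\) via Theorem~\ref{thm:zg2-alternating}. It checks that the only partition of \(V(Q')\) into an induced three-vertex path and an edge admits no alternating two-chain schedule in either orientation.
\end{enumerate}

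\textbf{Your route.} You pick an edge whose subdivision raises the classical parameter itself. Your identification \(D'\cong K_{2,3}\) is right, so \(Z(D')=3\) by \eqref{eq:classical}. Theorem~\ref{prop:ZgZ}, together with the \((1,1)\) play on \(D\) already verified in Proposition~\ref{prop:deletion-nonmonotone}, then gives the strict increase with no new game analysis. Theorem~\ref{thm:complete-bipartite} is needed only if you want the exact value \(3\).

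\textbf{What each buys.} Your argument is shorter and rests only on the general lower bound and a classical computation. The paper's example shows something stronger: subdivision can raise \(Z_g\) even when \(Z\) is unchanged. The increase there is a genuinely game-theoretic effect rather than one inherited from ordinary zero forcing.
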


\begin{proof}
Let \(Q\) be the paw graph with triangle \(abc\) and pendant edge \(cd\).
Alice can seed \(a\), Bob can seed \(c\), and the forces
\(a\to b\) and \(c\to d\) give outcome \((1,1)\). Hence
Lemma~\ref{lem:one-one-retention} gives \(Z_g(Q)=2\).

Now subdivide \(cd\) with a new vertex \(w\), obtaining \(Q'\). Alice
seeds \(d\), creating the force \(d\to w\). On Bob's first turn, \(w\)
is the only seed target that would block this force, so the Force Preservation
Rule permits Bob to seed \(a\), \(b\), or \(c\).

Suppose first that Bob seeds \(a\) or \(b\). Alice performs \(d\to w\),
creating the force \(w\to c\). Bob must then seed the other triangle vertex,
which is the only nonblocking seed target, and Alice performs \(w\to c\).
This retained continuation has outcome \((1,2)\): Alice's single seed is
minimum by Lemma~\ref{lem:2}, and Bob necessarily seeds on both of his turns.

If Bob instead seeds \(c\), Alice again performs \(d\to w\). The two
remaining vertices \(a\) and \(b\) are then seeded, one by each player, so
the outcome is \((2,2)\). Alice cannot use only one seed in this branch: a
seed on her third turn would already be her second, while after \(d\to w\)
she has no force to either \(a\) or \(b\) and must later seed one of them.
Thus this continuation is retained at her position. Bob uses two seeds in
every retained branch of his first position, so both types of reply are
optimal for him. At the initial position Alice cannot use fewer than one seed,
and the first branch attains one. Consequently,
\[
(1,2)\in\mathcal O(Q')
\qquad\text{and hence}\qquad
Z_g(Q')\leq3.
\]

It remains to rule out \(Z_g(Q')=2\). By
Theorem~\ref{thm:zg2-alternating}, this would require an alternating
two-chain forcing schedule, with Alice's chain containing three vertices and
Bob's chain containing two. A forcing chain is an induced path, since a chord
would give an earlier forcing vertex at least two white neighbours. The only
partition of \(V(Q')\) into an induced three-vertex path and an edge is, up to
interchanging \(a\) and \(b\),
\[
\{c,w,d\}\mathbin{\dot\cup}\{a,b\}.
\]
Indeed, the other apparent candidate has \(\{a,b,c\}\) inducing a triangle,
not a path.

There are two orientations for Alice's chain. If it begins \(c,w,d\), then
on Alice's first force the vertex \(c\) still has both \(w\) and Bob's
uncoloured triangle vertex as white neighbours. If it begins \(d,w,c\), then
Alice can force \(d\to w\), but Bob's first triangle vertex still has the
other triangle vertex and \(c\) as white neighbours and cannot advance Bob's
chain. Hence no alternating two-chain forcing schedule exists. Therefore
\(Z_g(Q')\neq2\), and so \(Z_g(Q')=3>Z_g(Q)\).
\end{proof}

\section{Concluding remarks}
The preceding sections establish the basic framework and several structural
properties of the game zero forcing number. Many natural questions remain
open, and exact computation is useful both for testing proposed statements
and for identifying plausible directions. This section records the
computational method and evidence used in the paper, then presents the
resulting conjectures and questions.
\subsection{Computational results}
All computational statements in this paper were obtained by exact,
set-valued backward induction. A position is represented by a pair
\((B,R)\) of disjoint vertex sets containing the vertices currently coloured
blue and red. The player to move is determined by the parity of
\(\lvert B\rvert+\lvert R\rvert\).

For a position, let \(W=V(G)\setminus(B\cup R)\), let \(F\subseteq W\) be
the set of targets of the mover's legal forces, and let \(T\subseteq W\) be
the set of targets of the opponent's legal forces. Put
\[
S=W\setminus(T\cup F).
\]
Thus \(S\) is precisely the set of nonblocking seed targets; a target in
\(F\) is an available force move, not an alternative seed move. The legal
target set is
\[
\begin{cases}
W, & T=\varnothing\text{ or }S=\varnothing,\\
S\cup F, & \text{otherwise}.
\end{cases}
\]
Targets in \(F\) are evaluated as force moves and all other permitted targets
as seed moves. This is exactly the clarified Force Preservation Rule,
including permission to execute an own force at a contested target and the
exception when no nonblocking seed is available.

At a terminal position the program returns the residual seed pair
\((0,0)\). Moving backwards through the game tree, a seed move adds one to
the corresponding player's coordinate, while a force move adds nothing. At
an internal position the program takes the union of the resulting pairs from
all legal children and retains every pair that minimises the coordinate
belonging to the player whose turn it is. At the initial position these
residual counts are the final seed totals. Thus the implementation follows
the definition of \(\mathcal O(P)\) without imposing an additional tie-break.
Identical states are memoised, so at most
\(3^{\lvert V(G)\rvert}\) colourings are considered.

The classical value \(Z(G)\) is computed independently by enumerating initial
sets in increasing cardinality and applying the ordinary colour-change rule
to closure. Connected unlabelled graphs were generated once per isomorphism
class with \texttt{geng} from \texttt{nauty/Traces}
\cite{mckay2014practical} and stored in graph6 format. The search uses no
randomized or floating-point computation.

The implementation was calibrated against hand calculations for paths,
cycles, complete graphs, stars, the Petersen graph, and the counterexamples
in this paper. As an additional check, all connected graphs through order
\(5\) were evaluated by a second implementation. The principal exact audits
are summarized below.
\begin{center}
\small
\begin{tabularx}{\textwidth}{@{}>{\raggedright\arraybackslash}Xr>{\raggedright\arraybackslash}X@{}}
\hline
\textbf{Test class} & \textbf{Instances} & \textbf{Result}\\
\hline
Connected graphs of order at most \(9\)
  & \(273{,}192\) & No violation of \(Z_g(G)\leq2Z(G)\)\\
Edge subdivisions from connected graphs of order at most \(8\)
  & \(170{,}884\) & Both increases and decreases occur\\
Edge subdivisions with base minimum degree at least \(2\)
  & \(121{,}685\) & Both increases and decreases occur\\
\(K_{m,n}\), with \(m,n\geq2\) and \(m+n\leq13\)
  & \(30\) & \(Z_g(K_{m,n})=m+n-2\)\\
Connected noncomplete graphs with \(\delta\geq2\), through order \(9\)
  & \(205{,}790\) & No violation of \(Z_g(G)\leq |V(G)|-2\)\\
\hline
\end{tabularx}
\end{center}
\vspace{0.5cm}
In the two subdivision rows, an instance is a pair \((G,e)\), where \(G\)
is one graph6 representative of a connected unlabelled graph and \(e\) ranges
over all edges in that representative. Subdivisions of different edges are
therefore counted separately, even when the resulting graphs are isomorphic.

Among the \(273{,}192\) connected unlabelled graphs of order at most \(9\),
exactly \(20{,}224\) have optimal outcome pairs with more than one total. The
first is the graph in Example~\ref{exam:minimum-necessary}.

The minimum-degree-two subdivision audit contains \(8{,}025\) base graphs.
Of the \(121{,}685\) edge subdivisions, \(7{,}094\) increase \(Z_g\), \(218\)
decrease it, and \(114{,}373\) leave it unchanged. The first decrease has base
graph6 code \texttt{FCurW}; subdividing edge \(04\) produces graph6 code
\texttt{GCUr[O} and changes the value from \(4\) to \(3\). Thus subdivision
monotonicity fails even when the base graph has minimum degree at least two.

The exact solver, audit scripts, and machine-readable result summary are
available in the versioned computational archive at
\url{https://zf.pointdynamics.com/zfgame-computational-archive-v1.0.2.zip}.

\subsection{Conjectures and further questions}

The computations suggest the following conjectures and further questions relating the game zero forcing number to classical zero forcing and graph operations.

\begin{conj}\label{conj:zgz} Every connected graph $G$ with order $n\ge 2$ satisfies
  $$ Z_g(G) \le 2Z(G).$$
\end{conj}

Note that \(Z(P_n)=1\). Thus Proposition~\ref{prop:path} shows that,
if Conjecture~\ref{conj:zgz} is true, then the bound is best possible.
Exact backward induction verifies Conjecture~\ref{conj:zgz} for all
\(273{,}192\) connected unlabelled graphs through order \(9\). The bound is
tight in this range. For example, the order-eight graph with graph6 code
\texttt{G?BDB\{} has \(Z(G)=2\) and \(Z_g(G)=4\).

\begin{example}
Let \(G=H=K_2\). Since \(K_2\vee K_2=K_4\),
\[
Z_g(K_2\vee K_2)=Z_g(K_4)=3
<4=Z_g(K_2)+Z_g(K_2).
\]
\end{example}
However, a computer search shows that for all
\(3{,}037\) unordered pairs of connected unlabelled graphs satisfying
\(\lvert V(G)\rvert,\lvert V(H)\rvert\geq3\) and
\(\lvert V(G)\rvert+\lvert V(H)\rvert\leq10\), it holds that 
\[Z_g(G\vee H)\geq Z_g(G)+Z_g(H).
\] In particular,
\(Z_g(P_4\vee P_4)=5\geq4=Z_g(P_4)+Z_g(P_4)\).
This leads to the following natural question.

\begin{question}\label{ques:join-lower-bound}
If \(G\) and \(H\) are connected graphs with
\(\lvert V(G)\rvert,\lvert V(H)\rvert\geq3\), is
\[
Z_g(G\vee H)\geq Z_g(G)+Z_g(H)?
\]
\end{question}
A possible strengthening is suggested by the same computation.
\begin{question}\label{ques:minimum-degree-order-bound}
If \(G\) is a connected noncomplete graph of order \(n\) with
\(\delta(G)\ge2\), is
\[
Z_g(G)\leq n-2?
\]
\end{question}
The bound holds for all \(205{,}790\) such graphs through order \(9\).
Theorem~\ref{thm:complete-bipartite} shows that the proposed bound would be
sharp on every \(K_{m,n}\) with \(m,n\geq2\).

Finally, we consider the behaviour of the game zero forcing number under clique sums. Clique sums are not additive, even for 1-sums. Identifying one endpoint in
two copies of \(K_2\) gives \(P_3\), and hence
\[
Z_g(P_3)=2<4=Z_g(K_2)+Z_g(K_2).
\]
There is also no formula depending only on the two summand values. Indeed,
identifying one vertex in two copies of \(K_3\) gives the bowtie graph
\(B\). A direct backward-induction check of its two first-move orbits gives
\(\mathcal O(B)=\{(2,1)\}\), and therefore \(Z_g(B)=3\). Since
\(Z_g(K_2)=Z_g(K_3)=2\), the pairs \((K_2,K_2)\) and \((K_3,K_3)\) have the
same summand values, but their 1-sums have game values \(2\) and \(3\),
respectively. In particular,
\[
Z_g(B)=3<4=Z_g(K_3)+Z_g(K_3).
\]
Finding useful bounds for general clique sums remains open.

\bibliographystyle{plain}
\bibliography{references}

\end{document}